\algnewcommand{\And}{\textbf{and}}
\newtheorem{thm}{Theorem}[section]
\newtheorem{lem}[thm]{Lemma}
\theoremstyle{definition}
\newtheorem{defn}{Definition}
\newtheorem{exmp}{Example}
\newtheorem{remark}[thm]{Remark}
\newcommand{\be}{\begin{equation}}
\newcommand{\ee}{\end{equation}}
\newcommand{\bea}{\begin{eqnarray}}
\newcommand{\eea}{\end{eqnarray}}
\newcommand{\Graph}{\mathcal{G}}
\newcommand{\Paths}{\mathcal{P}} % The set of paths
\newcommand{\Path}{\mathcal{P}}  % A particular path
\newcommand{\RPath}{\mathcal{R}}
\newcommand{\weight}{\mathcal{W}}
\newcommand{\NetworkGraph}{(\Graph, V^B, \Paths)}
\DeclareMathOperator{\PCD}{PCD}
\newcommand{\R}{\mathbb{R}}
\title{Graph Reconstruction from Path Correlation Data}
\thanks{This material is based upon work supported by DARPA and by the
  National Science Foundation under Grant DMS-1410657. The views, opinions and/or findings expressed are those of the authors and should not be interpreted as representing the official views or policies of the Department of Defense or the U.S. Government.}
\author[G.~Berkolaiko]{Gregory Berkolaiko}
\address{Department of Mathematics, Texas A\&M University, College
  Station, TX 77843-3368}
\author[N.~Duffield]{Nick Duffield}
\address{Department of Electrical and Computer Engineering, Texas A\&M University, College
  Station, TX 77843}
\author[M.~Ettehad]{Mahmood Ettehad}
\address{Department of Mathematics, Texas A\&M University, College
  Station, TX 77843-3368}
\author[K.~Manousakis]{Kyriakos Manousakis}
\address{Perspecta Labs, Basking Ridge, NJ 07920}
\begin{document}
\maketitle

\begin{abstract}
A communication network can be modeled as a directed connected graph
with edge weights that characterize performance metrics such as loss
and delay.  Network tomography aims to infer these edge weights from
their pathwise versions measured on a set of intersecting paths
between a subset of boundary vertices, and even the underlying graph
when this is not known. In particular, temporal correlations between path metrics have been used infer composite weights on the subpath formed by the path intersection. We call these subpath weights the Path Correlation Data.

In this paper we ask the following question: when can the underlying
weighted graph be recovered knowing only the boundary vertices and the
Path Correlation Data? We establish necessary and sufficient
conditions for a graph to be reconstructible from this information,
and describe an algorithm to perform the reconstruction. Subject to
our conditions, the result applies to directed graphs with asymmetric
edge weights, and accommodates paths arising from asymmetric routing
in the underlying communication network. We also describe the
relationship between the graph produced by our algorithm and the true
graph in the case that our conditions are not satisfied.  
\end{abstract}

\textbf{Keywords:} network tomography, end-to-end measurement,
covariance, logical trees, asymmetric routing, unicast probing

%%% Local Variables:
%%% mode: latex
%%% TeX-master: "exact_reconstruction"
%%% End:

\section{Introduction}
\subsection{Background and Motivation}

\newcommand{\paraspace}{\vspace{0.02in}}
\newcommand{\parab}[1]{\paraspace\noindent{\textsl{#1}}}
\def\cG{\mathcal G}
\def\cW{\mathcal W}
\def\cD{\mathcal D}
\def\cP{\mathcal P}

\parab{Graphs and Communication Networks.}
The problem that we study originates in performance measurement of
packet communications networks. These are modeled as a a weighted
directed graph $\cG=(V,E,\cW)$ in which the vertex set $V$ represents
routers, the edges $E$ represent directed links between routers, and
the edge weights $\cW$ characterize performance metrics of the
associated links.  A subset $V^B\subset V$ of vertices represents the
``boundary'' of the network where the measurements are performed.  We
assume that there is a fixed directed path $\Path(b,b')$ connecting each
boundary node pair $b,b' \in V^B$ and that one can measure the common
portion of any two paths.

More specifically, we will assume one can measure the length of the
following: the path between any two boundary nodes, the common part of
the two paths from a boundary node to any two other boundary nodes, as
well as the common part of the paths to a boundary node from any
two other boundary nodes.  This set of measurements we will call the
Path Correlation Data (PCD).  We establish necessary and sufficient
conditions --- which turn out to be rather natural --- for the
reconstruction of a graph from its PCD and present an algorithm to
achieve it.  In the case when the underlying graph violates the
reconstructibility conditions, we describe the result of the algorithm
--- it turns out to be the ``simplest'' routing network that produces
the observed PCD.

\parab{Network Tomography and the Inversion Problem.}
The form of our model and the assumptions we make originate from a
body of work developed under the term \textsl{Network Tomography}
\cite{10.2307/2291416} that seeks to infer link metrics and even the
underlying network topology from the measured metric values on paths
traversing the network between a set of routers at the network
boundary, represented by $V^B$.  This setting is similar to other
graph reconstruction problems, such as tomography of electrical
resistance networks (see, e.g., \cite{Cheney99,MR2719770}), optical
networks \cite{MR3634446}, and graph reconstruction from
Schr\"odinger-type spectral data (see, e.g.,
\cite{MR2353313,MR2545980}).  However, in a communication network
model there is a single path between given origin and destination, in
contrast to the electrical current flowing between two points a
resistive medium via all possible paths.  In this sense, our model
is more similar to combinatorial reconstruction problems
\cite{MR1047783,EvaLan_aam18}.

In many practical cases the communication network metrics are additive
in the sense that the sum of metric values over links in a path
corresponds to the same performance metric for the path. Examples of
additive metrics include mean packet delay, log packet transmission
probability, and variances in an independent link metric model.  For
additive metrics, a putative solution to the network tomography
problem attempts to invert a linear relation expression between the
set of path metrics $\mathcal{D}$ and the link metrics $\mathcal{W}$ in the form
\begin{equation}
  \label{eq:lin} 
  \mathcal{D} = \mathcal{A} \mathcal{W}
\end{equation}
Here $\mathcal{A}$ is the incidence matrix of links over paths,
$\mathcal{A}_{\mathcal{P},\ell}$ is 1 if path $\mathcal{P}$ traverses link
$\ell$, and zero otherwise. The linear system (\ref{eq:lin}) is
generally underconstrained in real-life networks, and hence does not
admit a unique solution \cite{4016134}. To overcome this deficiency,
one approach has been to impose conditions on the possible solutions,
typically through sparseness, effectively to find the ``simplest''
explanation of the observed path metrics; see
\cite{DBLP:journals/tit/Duffield06,10.1007/978-3-642-30045-5_22}. A
different approach in the similar problem of traffic matrix tomography
has been to reinterpret (\ref{eq:lin}) as applying to bi-modal
measurements of packet and bytes counts \cite{SinMic_ip07}, or 
of empirical means and variances then imposing constraints between these based on
empirical models \cite{10.2307/2291416,866369}. However, the high
computational complexity of this approach makes it infeasible for
real-world communications networks \cite{Zhang:2003:FAC:781027.781053}, although
quasi-likelihood methods offer some reduction in
complexity \cite{1212664}.  A related approach known as Network
Kriging seeks to reduce dimensionality in the path set by assumption
on prior covariances \cite{4016134}.

\parab{Correlations and Trees.}
More relevant for the work of this paper has been the idea to 
exploit correlations between metrics on different paths that occur due
to common experience of packets on their intersection. One variant
uses multicast probing \cite{796384} or emulations thereof
\cite{DBLP:journals/ton/DuffieldPPT06}. Another variant exploits the
fact that variances of some measurable packet statistics are both
additive and independent over links.  If $\cD$ is such a
variance-based metric, then
$ \text{Cov}(\cD_{P_1}, \mathcal{D}_{P_2}) =
\text{Var}(\cD_{P_1\cap P_2})$
where $\cD_P$ denotes the metric value across path $P$; see
\cite{DBLP:journals/ton/DuffieldP04}.

We abstract both these cases into a unified data model in which for
every triple $b$, $b_1$ and $b_2$ of boundary vertices, we can
measure, via covariances of the packet statistics, the metric of the
intersection $P=\Path(b,b_1)\cap \Path(b,b_2)$ as well as to the metric of
the intersection $P=\Path(b_1,b)\cap \Path(b_2,b)$.  The results of such
measurements we will denote by $\PCD(b \prec b_1,b_2)$ and
$\PCD(b_1,b_2 \succ b)$ correspondingly. We remark that we use the
symbols $\prec$ and $\succ$ here not as binary comparison operators
but as pictograms meant to evoke the topological structure of the
corresponding pair of paths.  The totality of such measurements we
call the \textbf{path correlation data}.  Note that we will not, in
general, assume that the paths are symmetric; the path $\Path(b,b_1)$
may be different topologically from the path $\Path(b_1,b)$.  As a
consequence, the values $\PCD(b \prec b_1,b_2)$ and
$\PCD(b_1,b_2 \succ b)$ are in general different.  We will assume that
the function $\PCD$ is measured exactly; see below for a brief
discussion of possible sources of error and the techniques for
error-correction. 

Under fairly general conditions that will be in force in this paper,
the problem has a natural formulation in terms of trees. First assume,
that for each $b,b_1,b_2\in V^B$, the intersection
$\Path(b,b_1)\cap \Path(b,b_2)$ is connected. Second, assume that the
metric $\cD$ is path increasing, i.e., $\cD(\Path)<\cD(\Path')$ for
$\Path\subsetneq \Path'$. As shown in \cite{971737}, the quantities
$\{\PCD(b \prec b_1,b_2): b_1, b_2\in V^B\}$ give rise to an embedded
logical weighted tree rooted at $b$ (called ``source tree''). The tree
is computed iteratively by finding node pairs $(b_1,b_2)$ of maximal
$\PCD(b \prec b_1,b_2)$ and identifying each such pair with a branch
point in the logical tree. Each logical link is assigned a weight
equal to the difference between the values of
$\text{PCD}(b \prec \cdot,\cdot)$ associated with its end points.
Similarly, the quantities $\PCD(b_1,b_2 \succ b)$ give rise to an
embedded logical tree with a single receiver $b$ and the sources
$V^B \setminus \{b\}$.  Such trees are called ``receiver trees''.
Under the assumed conditions, our problem can be restated as how to
recover the underlying weighted graph from the set of logical source
and receiver trees rooted at every $b\in V^B$.

\parab{Summary of the Results}. The main contribution of this paper,
Theorem~\ref{thm:main}, is to show that under natural conditions, a
weighted directed graph $\cG$ can be recovered knowing only the
graph's Path Correlation Data (PCD).  The conditions under which
Theorem~\ref{thm:main} holds are (i) each edge is traversed by at
least one path in $\Path$; (ii) each non-boundary node is
\textsl{nontrivial} in the sense that in-degree and out-degree are not
both equal $1$; and (iii) each non-boundary node $x$ is
\textsl{nonseparable} in the sense that the set of paths
$\Path(x)\subset \Path$ that pass through $x$ cannot be partitioned
into two or more subsets with non intersecting end point sets. Our
result holds without the assumption of weight symmetry (defined as
requiring the existence of the reverse of any edge in $G$, having the
same weight) or path symmetry (defined as the paths in either
direction between two boundary nodes traversing the same set of
edges). We prove the correctness of our reconstruction algorithm
(Algorithm~\ref{algorithm:non_symmetric}) under the stated
assumptions.

Our solution does not assume that link weights are symmetric. Neither
do we assume that that paths in either direction between two endpoints
are symmetric: they are not required to traverse the same set of
internal (i.e. non-boundary) vertices. This level of generality
reflects networking practice, in which non-symmetric routing is
employed for policy reasons including performance and revenue
optimization \cite{Teixeira:2004:DHR:1012888.1005723}.  However, in
the cases where symmetric paths can be assumed {\it a priori}, this
knowledge enlarges the set of reconstructible networks.  We therefore
pay special attention to this case, providing alternative definitions
of the \emph{nontrivial} and \emph{nonseparable} vertices and a
separate proof of the correctness of
Algorithm~\ref{algorithm:non_symmetric} (which requires a one-line
change).  We also establish the correctness of a second, more
customized Algorithm~\ref{algorithm:symmetric} which applies only in
the case of symmetric weights and paths, and which is computationally
less expensive than Algorithm~\ref{algorithm:non_symmetric} applied to
this case.

\parab{Inference from Inconsistent Data.}
Before describing our approach in more detail, we note that practical
network measurement may provision data with imperfect consistency. For
example, input data may be provided in the form of weighted trees
computed from packet measurement over time intervals that are not
perfectly aligned, so that the metric of a path $\Path(b,b')$ may be
reported differently in the source tree from $b$ and the receiver tree
to $b'$. Even with aligned intervals, deviations from the model and
statistical fluctuations due to finitely many probe packets may result
in inconsistency. To enable our proposed algorithm to operate with
such data, we propose to compute a $\text{PCD}$ that is a
least-squares fit to inconsistent tree data. This extension and
associated error sensitivity analysis is described in the forthcoming
companion paper \cite{PrepUs2018}.

%%%%%%%%%%%%%%%%%%%%%%%%%%%
\subsection{An example of a communication network}

To illustrate the information available to an observer in our model,
consider the network graph schematically shown in
Fig.~\ref{fig:AltSource}. It is assumed that the end-to-end
measurements are possible among the boundary vertices
$V^B = \{b_1, ... , b_6\}$.  The three versions of the same graph
shown contain information about the paths between the given source
($b_1$, $b_2$ and $b_3$, correspondingly) and the
corresponding set of receivers $V^B \setminus \{b_i\}$; the links belonging to
these paths are highlighted in thicker lines.

\begin{figure}[t]
  \centering
  \includegraphics[scale=0.7]{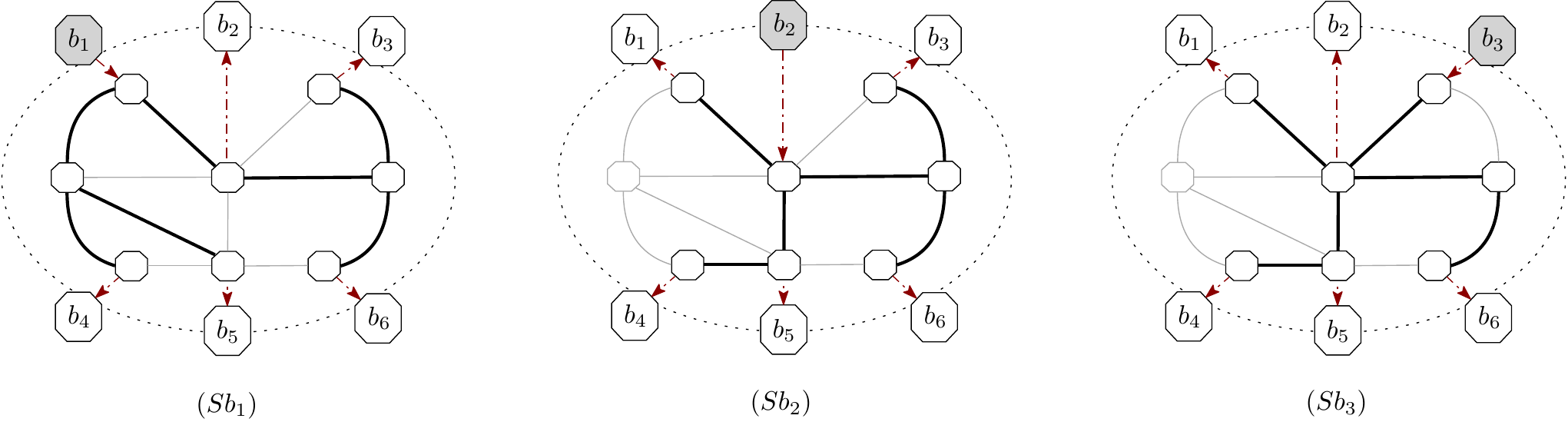}
  \caption{Alternative selections of the source and the corresponding
    routing paths (bold edges).} 
  \label{fig:AltSource}
\end{figure}

From the point of view of an external observer, the routing paths on
the graph are hidden but can be reconstructed, to a certain extent, by
the measurements with a fixed source and alternating receivers,
represented in our setting by queries to the $\PCD$ function. As
Fig.~\ref{fig:AltSourceLogic} shows, the trees reconstructed from PCD
are \emph{logical} trees where the edges represent the amalgamated
versions of the actual physical edges.  For example, the logical tree
labeled $(SLb_3)$ has a direct edge from $b_3$ to $b_6$, whereas the
actual route, shown in $(Sb_3)$ passes through an internal node.
Since this node does not feature as a junction in the tree $(Sb_3)$,
it will not be detected from the PCD.  Moreover each internal vertex
in the original graph has multiple appearances in the logical trees
with no identifying information attached to them.  Correctly
identifying multiple representation of the same internal node will be
the central challenge of this work.

\begin{figure}[t]
  \centering
  \includegraphics[scale=0.8]{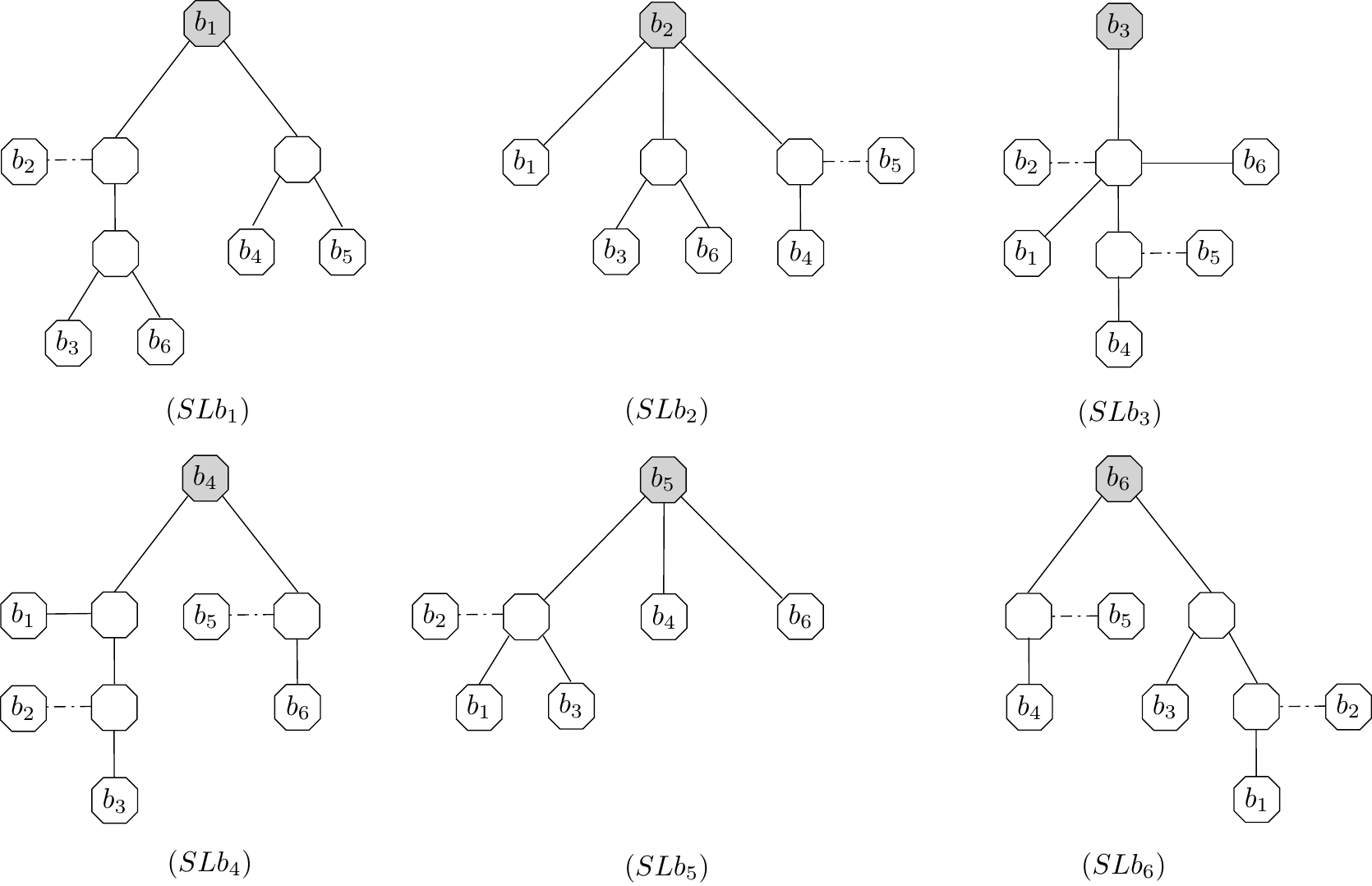}
  \caption{Representation of PCD on the network graph through the set of
    observed logical source trees for sources $b_1$ to $b_6$.}
  \label{fig:AltSourceLogic}
\end{figure}

The information form logical source trees
(Fig.~\ref{fig:AltSourceLogic}) is only enough for reconstruction of
special class of network graph, namely the symmetric one where both
the routing and edge weights are symmetric.  When this is not the
case, the
measurements of the form $\text{PCD}(b_1,b_2 \succ b)$ will be essential to
reconstruct \emph{logical receiver trees} shown in 
Fig.~\ref{fig:AltRoutingLogicalT2}.  Those contain information about
the paths with
the selected receiver $b$ and with the source set $V^B \setminus
\{b\}$.

\begin{figure}[t]
  \centering
  \includegraphics[scale=0.8]{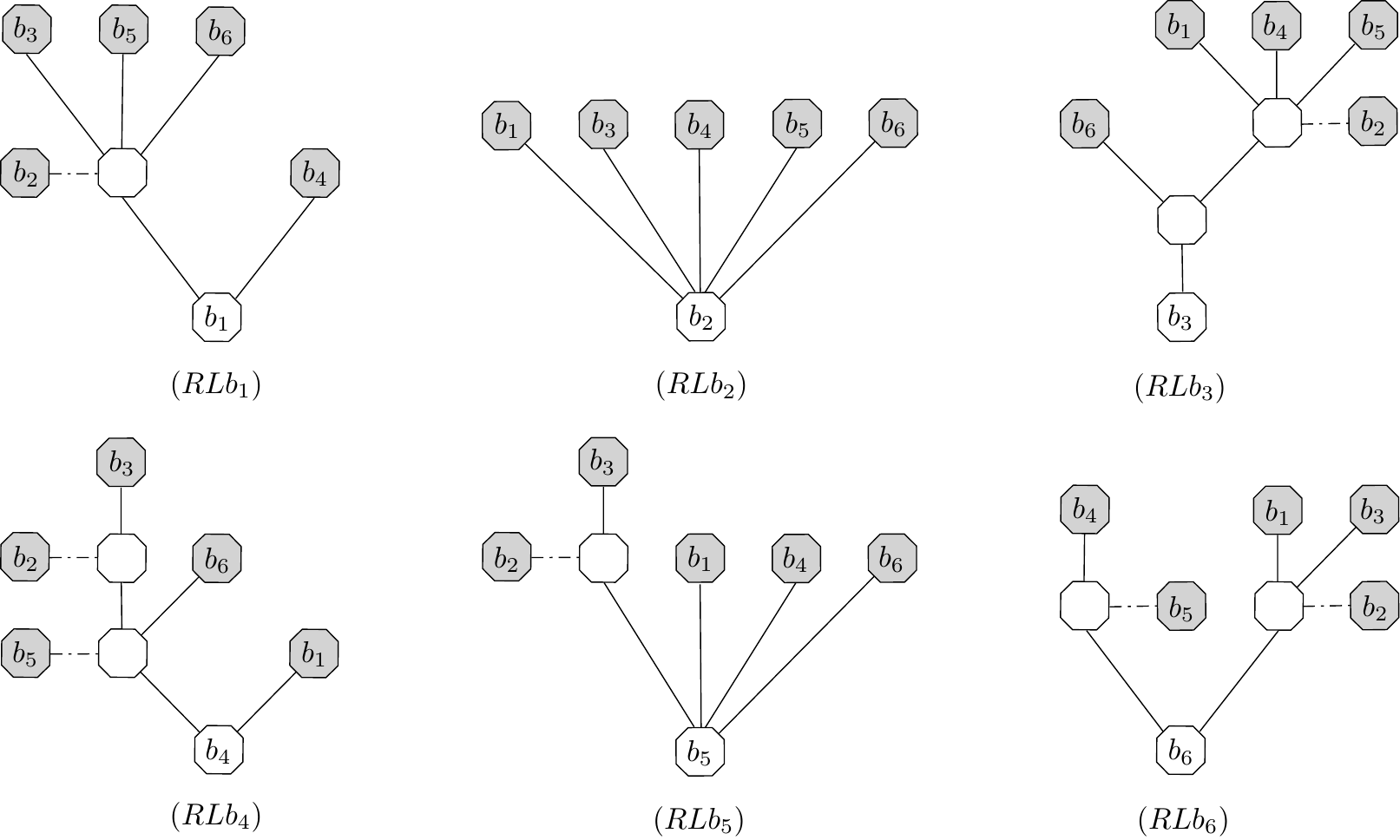}
  \caption{Representation of PCD on the network graph through the set of
    observed logical receiver trees for receivers $b_1$ to $b_6$.} 
  \label{fig:AltRoutingLogicalT2}
\end{figure}

To summarize, the main goal in this paper is to develop a practical
algorithm to reconstruct the original graph from set of measurements
information of the form $\text{PCD}(b_1 \prec b_2, b_3)$ and
$\text{PCD}(b_2, b_3 \succ b_1)$ for $b_1,b_2,b_3 \in V^B$ and to
establish necessary and sufficient conditions for the successful
reconstruction.

%%%%%%%%%%%%%%%%%%%%%%%%%%%%%%%%
\subsection{Outline of paper}

The outline of the paper is as follows. In Section~\ref{sec:statement}
we state the definitions and assumptions concerning the network
topology and path measurements.  We then state our main
Theorem~\ref{thm:main} concerning the reconstruction of the general
non-symmetric weighted graph under these assumptions. The proof of
Theorem~\ref{thm:main} will proceed by establishing that a computation
codified as Algorithm~\ref{algorithm:non_symmetric} reconstructs the
topology under the stated assumptions, as established in
Section~\ref{sec:proof_nonsymmetric} for non-symmetric routing.  We
also discuss the operation of Algorithm~\ref{algorithm:non_symmetric}
on graphs that do not satisfy some assumptions of
Theorem~\ref{thm:main}, and the relation of the output to the true
graph.  The flow of Algorithm~\ref{algorithm:non_symmetric} is
slightly modified if the routing is symmetric.  This case is
considered in Section~\ref{sec:proof_symmetric}.  We also present a
modified Algorithm~\ref{algorithm:symmetric}, suitable \emph{only} for
the case of symmetric paths.  We conclude in
Section~\ref{sec:conclusion_futurework} with a discussion of possible
future work.

%%% Local Variables:
%%% mode: latex
%%% TeX-master: "exact_reconstruction"
%%% End:

\section{Problem Statement and the Main Result}\label{sec:statement}
In this section we set up our model and formulate our results.
Informally, we have a graph with some metric assigned to directed
edges and a subset of vertices that is declared to be the ``boundary''
(denoted by $b_1$, $b_2$ etc) where the observations are made.  We
assume there is a fixed path (``route'') between each ordered pair of
boundary vertices (``source'' to ``receiver'').  We further assume
that the metric is such that we can measure the length of any route
and also the length of the common part of any two routes starting at
the same source or any two routes coming into the same receiver.
These assumptions and the reconstruction problem are made precise
below.

%%%%%%%%%%%
\subsection{Problem Setup}
\label{sec:setup}

\begin{defn}
	A network graph $\mathcal{N} = \NetworkGraph$ is an edge-weighted graph $\Graph =
	(V,E,\weight)$ together with a set $V^B\subset V$ of
	\emph{boundary vertices} and the paths $\Paths$ between
	them.  In detail, 
	\begin{itemize}
		\item $V$ is a finite set of vertices,
		\item $E \subset V\times V$ is the set of directed edges (no loops
		or multiple edges are allowed),
		\item $\weight : E \to \R_+$ are the edge weights,
		\item $V^B$ is an arbitrary subset of $V$
		\item there is a path $\Path(b,b')$ between any pair of boundary
		vertices $b \neq b'$; each path $\Path(b,b')$ is simple and
		assumed to be fixed for the duration of observation of the
		network.  The paths are assumed to have the \emph{tree consistency
			property}: for any $b_1$, $b_1'$, $b_2$ and $b_2'$ in $V^B$ the
		intersection $\Path(b_1,b_1') \cap \Path(b_2,b_2')$ is
		connected.
	\end{itemize}
\end{defn}

\begin{figure}[ht]
	\centering
	\includegraphics[width=0.375\textwidth]{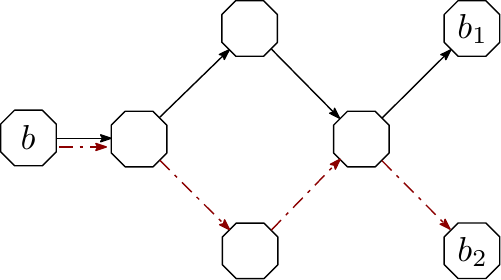}
	\caption{An example of the violation of the tree consistency
		property.  The paths $\Path(b,b_1)$ and $\Path(b,b_2)$ first
		diverge and then meet again.} 
	\label{fig:TreeConst}
\end{figure}

We will assume that a path $\Path(b,b')$ does not pass through any
other boundary vertices.  This is done for convenience only; a graph
can be easily made to satisfy this condition by ``drawing out'' the
boundary vertices from the bulk of the graph as shown in
Fig.~\ref{fig:BvRemove}.  The vertices that do not belong to the
boundary we will call \emph{internal vertices} and use the notation
$V^I = V \setminus V^B$.

\begin{figure}[ht]
	\centering
	\includegraphics[width=0.6\textwidth]{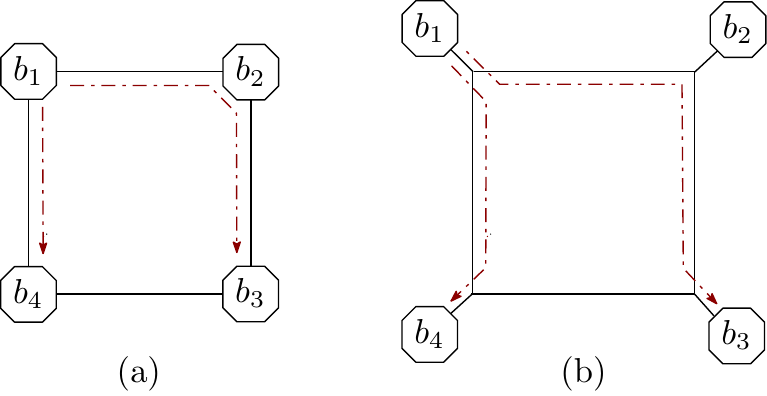}
	\caption{A graph (square with no diagonals) with the path
		$\Path(b_1,b_3)$ going through $b_2$.  The same graph with the
		boundary vertices drawn out.}
	\label{fig:BvRemove}
\end{figure}

We remark that we do not assume that the weights are symmetric:
$\weight_{x,y}$ is generally different from $\weight_{y,x}$.  We also
do not need to assume that the paths are symmetric.  However, since
the latter case is important in applications and allows for a
simplified reconstruction algorithm, we will devote some time to its
separate treatment, in particular in Definition~\ref{def:symm_routing}
and Section~\ref{sec:proof_symmetric}.

Let us consider some implications of the tree consistency property.
Consider two paths, $\Path(b,b_1)$ and $\Path(b,b_2)$ with some
distinct $b,b_1,b_2 \in V^B$.  The tree property implies that the paths
can be written as
\begin{align}
\label{eq:path1}
\Path(b,b_1) &= [b, x_1, x_2, ... , x_j, y_1, y_2, ..., b_1] \\
\label{eq:path2}
\Path(b,b_2) &= [b, x_1, x_2, ... , x_j, z_1, z_2, ..., b_2],
\end{align}    
where the vertex sets $\{y_1, y_2, ...\}$ and $\{z_1, z_2, ... \}$ are
disjoint, see Fig.~\ref{fig:Junction}(a).

Similarly tree consistency property applied to paths 
$\Path(b_1,b)$ and $\Path(b_2,b)$ implies that
\begin{align}
\label{eq:path_b1b}
\Path(b_1,b) &= [b_1, y_1, y_2, ... ,y_{i}, x_1, x_2, ..., b] \\
\label{eq:path_b2b}
\Path(b_2,b) &= [b_2, z_1, z_2, ... ,z_{j}, x_1, x_2, ..., b],
\end{align}    
with disjoint $\{y_1, y_2, ... , y_{m}\}$ and $\{z_1, z_2, ... ,
z_n\}$, see Fig.~\ref{fig:Junction}(b).

\begin{defn}
	The vertex $x_j$ in equations (\ref{eq:path1})--(\ref{eq:path2}) is
	called the $(b \prec b_1,b_2)$-junction. Note that the set
	$\{x_1, ... , x_j\}$ is allowed to be empty in which case $b$ acts
	as the junction.  The vertex $x_1$ in equations
	(\ref{eq:path_b1b})--(\ref{eq:path_b2b}) is called the
	$(b_1,b_2 \succ b)$-junction.
\end{defn}

\begin{figure}[ht]
	\centering
	\includegraphics[width=1\textwidth]{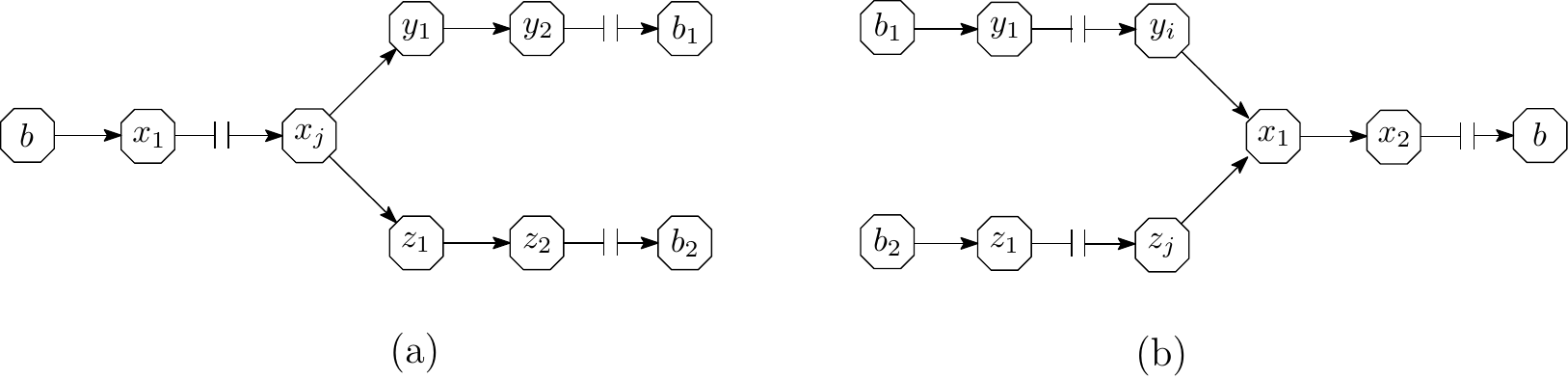}
	\caption{(a) $x_j = (b \prec b_1,b_2)$-junction, 
		(b) $x_1 = (b_1,b_2 \succ b)$-junction.} 
	\label{fig:Junction}
\end{figure}

We remark that we use symbols $\prec$ and $\succ$ not as relational
operators but as separators in the list of 3 vertices which are
pictorially similar to the path configurations in Figure
\ref{fig:Junction}.

To specify the graph reconstruction problem we will be solving we need
to define the set of \emph{measurements} available to us.
The length of a path is defined as the sum of the weights $\weight$ of
its edges; we will denote the length by $|\cdot|$.  We consider a
single vertex as a zero-length path; the length of an empty set is
also set to be zero.  This allows us to assign length to an intersection
of two paths between boundary vertices which is either empty or a single
vertex or a connected subpath.

The totality of the measurements we can make will be called the Path
Correlation Data (PCD).  In includes, for all $b, b_1, b_2 \in V^B$,
\begin{itemize}
	\item the length $|\Path(b,b_1)|$,
	
	\item the length $\left| \Path(b,b_1) \cap \Path(b,b_2)
          \right|$, which we will denote by $\PCD(b \prec b_1,b_2)$,
	
	\item the length $\left| \Path(b_1,b) \cap \Path(b_2,b)
          \right|$, which we will denote by $\PCD(b_1,b_2 \succ b)$.
\end{itemize}

Thus we can directly measure the distance from $b \in V^B$ to any $(b
\prec b_1,b_2)$-junction, or from the $(b_1,b_2 \succ
b)$-junction to $b$.  We can also infer the distance from the $(b
\prec b_1,b_2)$-junction to $b_1$, or from $b_1$ to the $(b_1,b_2\succ
b)$-junction, see Fig.~\ref{fig:JunctionMeas}. 

\begin{figure}[ht]
	\centering
	\includegraphics[width=1\textwidth]{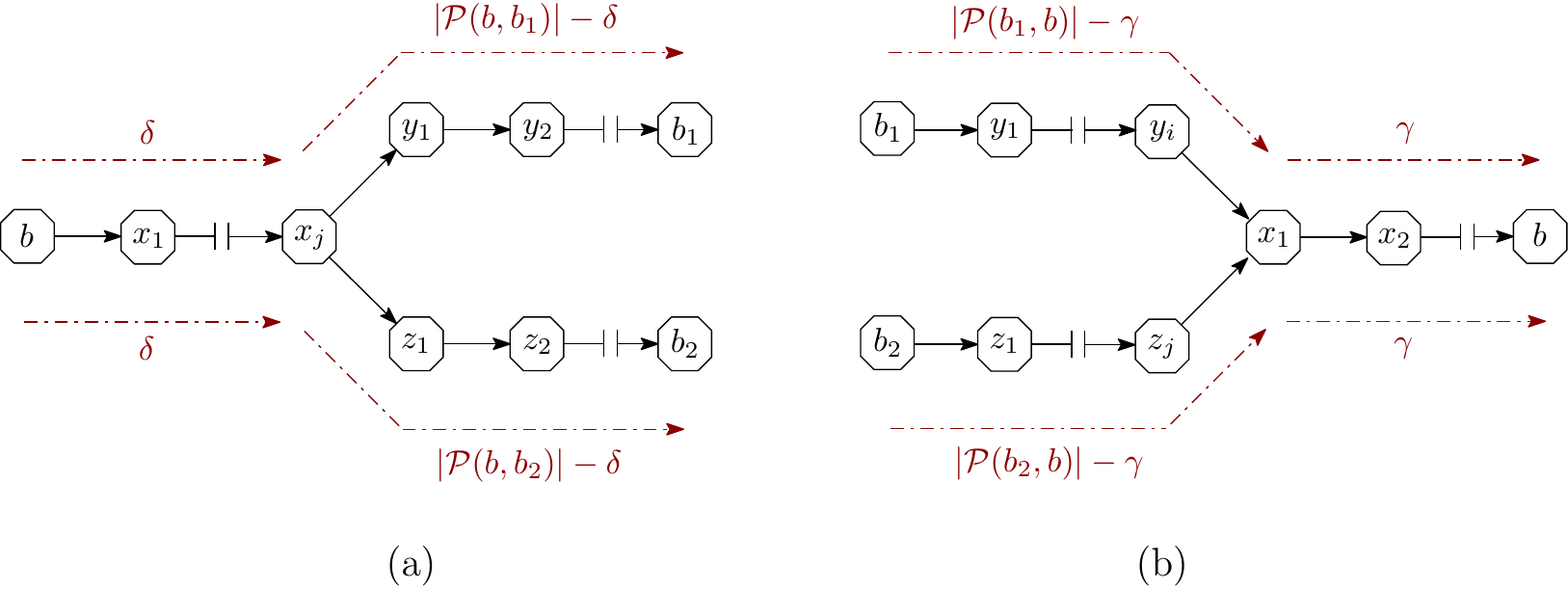}
	\caption{Various distances we can measure from Path
          Correlation Data (PCD).  Here $\delta = \PCD(b, \prec
          b_1,b_2)$ and $\gamma = \PCD(b_1,b_2 \succ b)$.} 
	\label{fig:JunctionMeas}
\end{figure}

Our principal question is thus: \emph{ Which network graphs
  $(\mathcal{G},V^B, \Paths)$ can be reconstructed from their path
  correlation data and how does one accomplish this?}

%%%%%%%%%%%
\subsection{Some obvious necessary conditions}
\label{sec:necessary_examples}

Before we state our result and the associated reconstruction
algorithm, let us consider examples that show some obvious
necessary conditions we need to impose on the network graph
$\mathcal{N}=(\Graph,V^B,\Paths)$ in order to be able to reconstruct it.

\begin{exmp}
	Consider the network graphs in Fig.~\ref{fig:Example_1}, with $V^B =
	\{u,v,w\}$ and with the routing
	paths indicated by dashed lines.  
	None of the routing paths pass through the
	edge $e=(u,w)$ therefore the length of this edge cannot influence
	the Path Correlation Data in any way.  Conversely, the length
        of the edge $e$ (and even its existence) cannot be inferred
        from the PCD.
	\begin{figure}[ht]
		\centering
		\includegraphics[width=0.7\textwidth]{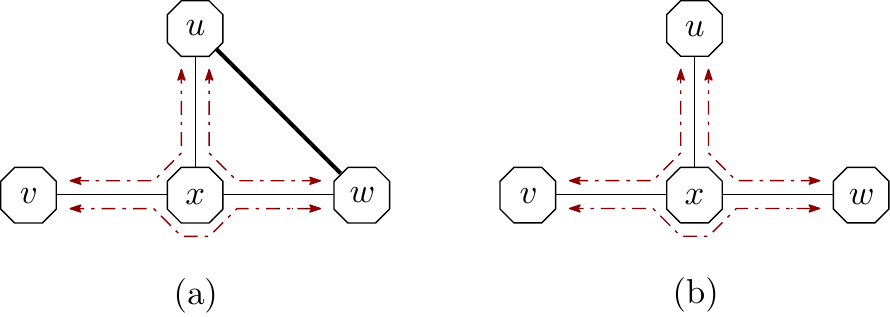}
		\caption{Failure to recover the edge $e=(u,w)$: the graphs (a) and
			(b) will produce the same PCD since none of the paths pass
			through the extra edge.}.
		\label{fig:Example_1}
	\end{figure}
\end{exmp}

\begin{exmp}
	\label{ex:bad_degree2}
	Consider the network graphs in Fig.~\ref{fig:Example_2} with
        boundary vertex set $V^B = \{u,w\}$.  In the left graph the
        length of the edge $(x,u)$ will never appear in the PCD on its
        own, without being added to the length of the edge $(x,w)$.
        Therefore, it will be impossible to reconstruct the location
        of the vertex $x$, and even detect it at all.  This will be
        the case for any internal vertex of degree 2.
	
	\begin{figure}[ht]
		\centering
		\includegraphics[width=0.45\textwidth]{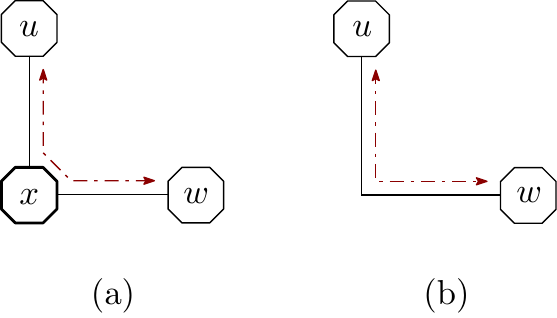}
		\caption{Failure to recover the internal vertex $x$: the graphs
			(a) and (b) will produce the same PCD as long as the sum of the
			lengths of $(u,x)$ and $(x,w)$ in the graph (a) is equal to the
			length of the edge $(u,w)$ in the graph (b).}
		\label{fig:Example_2}
	\end{figure}
\end{exmp}

\begin{exmp}
	Consider the network graphs in Fig.~\ref{fig:Example_3} with the
	boundary vertex set
	\begin{equation}
	V^B = \{u_1,v_1,u_2,v_2\}.      
	\end{equation}
	In Fig.~\ref{fig:Example_3}(a) the paths $\Path(u_1,v_1)$ and
        $\Path(u_2,v_2)$ intersect at an internal vertex $x$, while in
        Fig.~\ref{fig:Example_3}(b) they do not have any vertices in
        common.  However, the two graphs will produce the same PCD and
        will be indistinguishable.
	
	\begin{figure}[ht]
		\centering
		\includegraphics[width=0.9\textwidth]{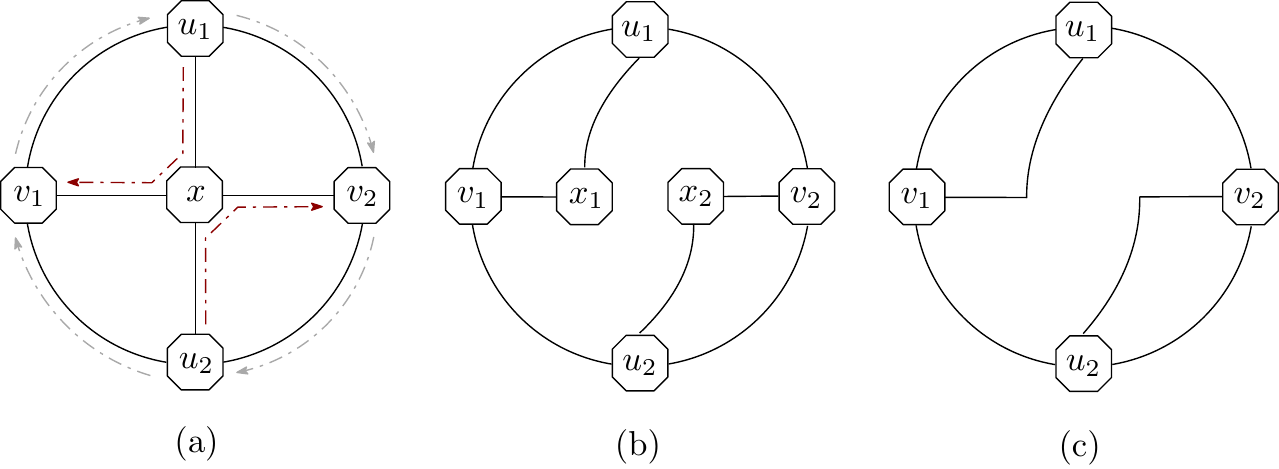}
		\caption{Failure to recover the integrity of the
                  vertex $x$: the graphs (a), (b) and (c) will produce
                  the same PCD since the path between $u_1$ and $v_1$
                  is in no way correlated to the path between $u_2$
                  and $v_2$.}
		\label{fig:Example_3}
	\end{figure}
	
	The reader will undoubtedly observe that the vertices $x_1$ and $x_2$
	in Fig.~\ref{fig:Example_3}(b) will not be detected, and the
        graph in Fig.~\ref{fig:Example_3}(c) is the ``minimal'' graph
        which will have the same PCD.  By making the graph
	structure more complicated one can easily construct an example
	where $x$, $x_1$ and $x_2$ will act as junctions for some pairs of
	paths and thus will be detectable, see
        Fig.~\ref{fig:Example_3_Sep}, but the two graphs are still not
        distinguishable from their PCD.
	
	\begin{figure}[ht]
		\centering
		\includegraphics[width=0.7\textwidth]{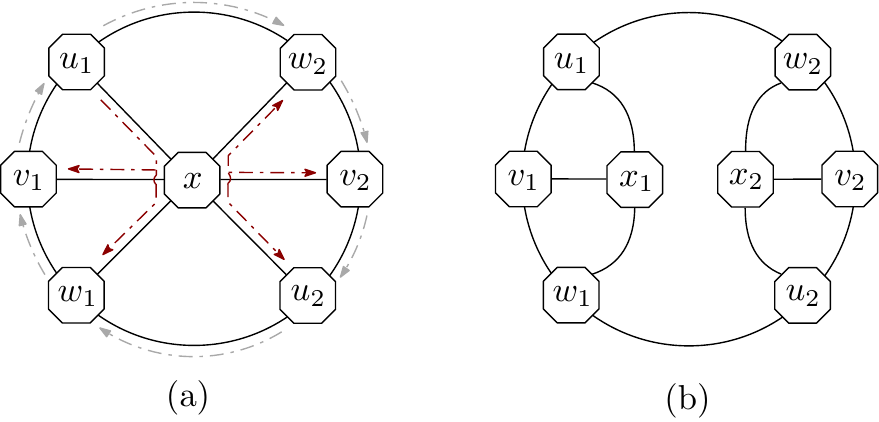}
		\caption{A more complicated example of failure to recover the
			integrity of the vertex $x$.}
		\label{fig:Example_3_Sep}
	\end{figure}
	
	Thus the real problem is that in the left graph in
	Fig.~\ref{fig:Example_3_Sep} there are two families of paths going
	through the internal vertex $x$ that do not interact in any way.
\end{exmp}

%%%%%%%%%%%%
\subsection{Statement of the main result}
\label{sec:main_result}

The main result of this paper is that the necessary conditions
illustrated by examples in Section~\ref{sec:necessary_examples} are in
fact sufficient for the reconstruction!  We start by formalizing
(and naming) the conditions we observed.

\begin{defn}
	An edge is called \emph{unused} if there is no path in $\Paths$
	containing it.
\end{defn}

We remark that if there are no unused edges in a network graph, each
internal vertex has at least one incoming and at least one outgoing edge.

\begin{defn}
	An internal vertex $x$ is called \emph{trivial} if it has only one
	incoming and only one outgoing edge (i.e.\ edges of the form
	$(y_1,x)$ and $(x,y_2)$ correspondingly).
\end{defn}

We remark that if there are no unused edges, then there are at least
two paths through every non-trivial internal vertex.

\begin{defn}
	For an internal vertex $x \in V^I$, let $S_x \subset V^B$ to be the set
	of the sources and $R_x \subset V^B$ be the set of the receivers
	whose paths pass through $x$. More precisely, 
	\begin{equation*}
	\begin{split}
	S_x &= \big \{ b \in V^B : \exists \hat{b} \in V^B,\ 
	x \in \Path(b,\hat{b}) \big \} \\
	R_x &= \big \{ \hat{b} \in V^B : \exists b \in V^B,\ 
	x \in \Path(b,\hat{b}) \big \}.
	\end{split}
	\end{equation*}
\end{defn}

\begin{defn}
  \label{def:separable}
  A vertex $x \in V^I$ is called \emph{separable} if there are
  disjoint non-empty partitions $S_x = S_x^1 \cup S_x^2$ and
  $R_x = R_x^1 \cup R_x^2$ with the property that
	\begin{equation}
	\label{eq:separable_def}
	b\in S_x^j,\ \hat{b}\in R_x^{j'} \mbox{ with }j\neq j'
        \quad\Rightarrow\quad x \notin \Path(b,\hat{b}).
	\end{equation}
\end{defn}

An example of a separable vertex is shown in Fig.~\ref{fig:SeparateExam}.  The
partition sets here are $S^1 = \{b_1\}$, $S^2 = \{b_2\}$, $R^1 =
\{b_3,b_4\}$ and $R^2 = \{b_5,b_6\}$.

\begin{figure}[ht]
	\centering
	\includegraphics[width=1\textwidth]{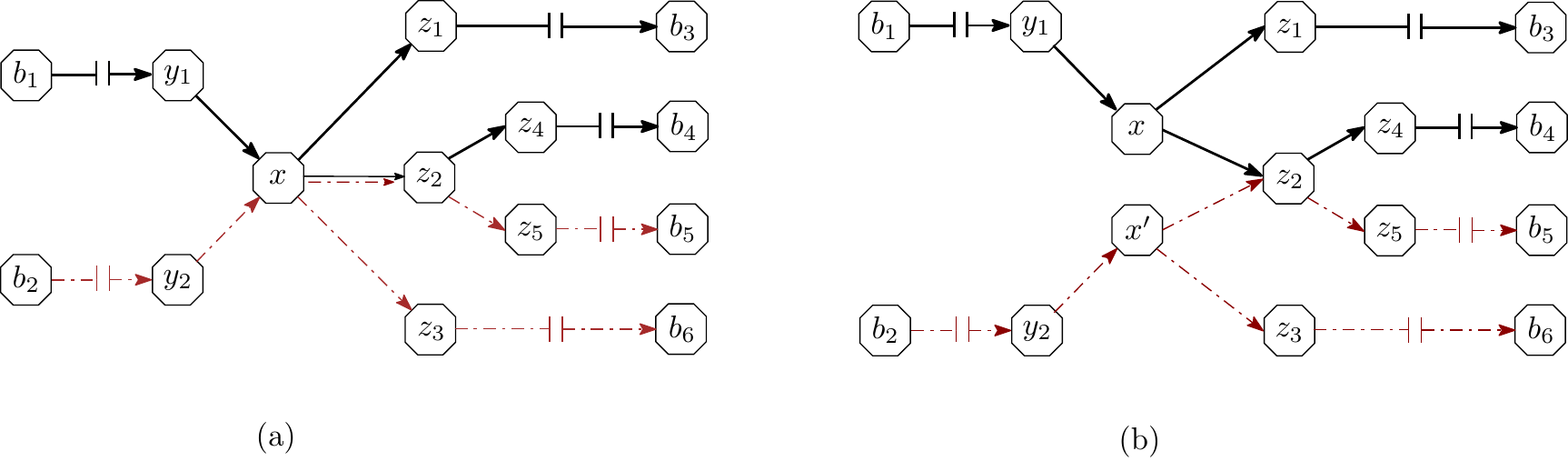}
	\caption{Paths through a separable vertex $x$ and its separation
		into $x_1$ and $x_2$.}
	\label{fig:SeparateExam}
\end{figure}

Finally, if the graph has symmetric routing we need to modify the
conditions slightly but the resulting reconstructibility theorem will
stay the same.  Naturally, symmetric routing is an extra piece of
information and more graphs are reconstructable in this case.  The
natural setting for the problem with symmetric routing is a
non-directed graph, but since we still allow non-symmetric edge
weights, we will keep the edges directed.  As a result, edges come in
pairs which correspond to undirected edges splitting into two directed
ones.  This is formalized in the definition of a ``network graph with
symmetric routing'' below.

\begin{defn}
	\label{def:symm_routing}
	We will say that the network graph $\mathcal{N}=\NetworkGraph$ has
	\emph{symmetric routing} if
	\begin{itemize}
		\item for all $x,y\in V$, $(x,y) \in E$ implies $(y,x) \in E$ and
		\item for all $b,b'\in V^B$, the path $\Path(b, b')$ is the
		reversal of the path $\Path(b',b)$, namely
		\begin{equation}
		\label{eq:reversal}
		\Path(b, b') = [b, x_1, x_2, \ldots, x_j, b']
		\quad \Rightarrow \quad
		\Path(b', b) = [b', x_j, x_{j-1}, \ldots, x_1, b].
		\end{equation}
	\end{itemize}
\end{defn}
	
\begin{defn}
  \label{def:symm_vertex}
	A vertex $x \in V^I$ in a network graph with symmetric routing is
	\emph{trivial} if it has two (or less) adjacent vertices. 
	%\end{defn}
	%\begin{defn}
	A vertex $x \in V^I$ in a network graph with symmetric routing is
	\emph{separable} if there is a disjoint partition $S_x = S_x^1 \cup S_x^2$
	so that
	\begin{equation}
	\label{eq:separable_def_symm}
	b_1 \in S_x^1,\ b_2 \in S_x^2 
	\quad \Rightarrow \quad
	x \notin \Path(b_1,b_2).
	\end{equation}
	
\end{defn}

We can now state our Main Theorem.  We stress that the statement of
the theorem applies uniformly to network graphs with or without
symmetric routing, the differences being absorbed by the definitions
above.  We will still need to provide two separate (but similar!)
proofs.

\begin{thm}[Main Theorem]
	\label{thm:main}
	Let $(\Graph,V^B,\Paths)$ be a network graph. If
	\begin{enumerate}
		\item \label{cond:edges}
		no edge $e \in E$ is unused,
		\item \label{cond:degrees}
		no $x \in V^I$ is trivial,
		\item \label{cond:nonsep} 
		no $x \in V^I$ is separable,
	\end{enumerate}
	then $(\Graph,V^B,\Paths)$ is uniquely reconstructable from
	its Path Correlation Data (PCD).
	
	To put it more generally, in every class of network graphs with the
	same PCD, there is a unique network graph which satisfies the above
	conditions.
\end{thm}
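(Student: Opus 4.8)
\subsection*{Proof proposal}

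The plan is to prove both assertions at once by analysing the reconstruction algorithm: I will build a graph $\widehat{\Graph}$ from the $\PCD$ alone, show that $\widehat{\Graph}$ always satisfies conditions~(1)--(3) and reproduces the given $\PCD$, and show that if the original graph already satisfies~(1)--(3) then $\widehat{\Graph}$ is isomorphic to it. The first assertion is then the special case in which the input is conforming, and the ``more general'' assertion follows because any two conforming graphs with the same $\PCD$ must each equal $\widehat{\Graph}$.

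First I would recover, from the $\PCD$, the logical source tree rooted at each $b$ and the logical receiver tree ending at each $b$, exactly as recalled in the introduction: branch points are located by maximising $\PCD(b\prec\cdot,\cdot)$ or $\PCD(\cdot,\cdot\succ b)$, and logical edges are weighted by the corresponding differences. By tree consistency each branch point of the source tree of $b$ is an internal vertex $x$ with $b\in S_x$, sitting at the known distance $|\Path(b,x)|$ from $b$, and dually on the receiver side. As the examples of Section~\ref{sec:necessary_examples} show, the entire difficulty is that one internal vertex occurs, unlabelled, as a branch point of many different trees, and reconstruction is precisely the problem of gluing these copies back together.

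The core of the proof is an identification relation on the set of all branch points, which I would define by distance consistency relative to a shared boundary vertex: a branch point on the $\hat b$-branch of one tree is declared equal to a branch point on a branch involving the same $\hat b$ in another tree exactly when the two implied distances to $\hat b$ agree. I would then establish three properties. \emph{Detection}: conditions~(1)--(3) together force every internal vertex to be a genuine branch point of at least one tree; the key step is that if $x$ has out-degree at least $2$ (all out-edges used, by~(1)) but branches in no source tree, then, since by tree consistency each receiver determines the out-edge of $x$ on the path reaching it while the absence of a source branch makes each source use a single out-edge, the out-edges induce a partition of $S_x$ and $R_x$ that is a separation of $x$, contradicting~(3); the in-degree case is dual, and~(2) supplies one of the two cases. \emph{No over-merging}: positivity of the weights makes $|\Path(b,\cdot)|$ strictly increasing along the simple path $\Path(b,\hat b)$, so the distance to $\hat b$ pins down the position on that path and distinct vertices are never identified. \emph{No over-splitting}: two copies of a fixed $x$ are linked whenever their defining paths share a boundary vertex through which both pass, and chaining such links connects all copies of $x$ exactly when the bipartite incidence on $S_x\cup R_x$ (with $b\sim\hat b$ iff $x\in\Path(b,\hat b)$) is connected, which is the negation of separability. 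Hence the equivalence classes of the relation are in bijection with $V^I$.

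Once the vertices are identified, the directed edges and their weights are read off from the adjacencies of the glued trees and the differences of the reconstructed distances, and one checks directly that $\widehat{\Graph}$ has no unused edge, no trivial vertex, and no separable vertex, and that its $\PCD$ equals the input. The step I expect to be the main obstacle is \emph{No over-splitting} and its interaction with \emph{Detection}: a vertex may branch on only one side (for instance when its in-degree is $1$), so the linking must be carried out through shared boundary references rather than through matched branch points on both sides, and one must verify carefully that the resulting chains connect all copies of a nonseparable vertex while never connecting copies of two distinct vertices. The symmetric-routing case should follow from the same argument after the one-line change to the branch-point matching indicated after Definition~\ref{def:symm_routing}, with separability read through Definition~\ref{def:symm_vertex}.
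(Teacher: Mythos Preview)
Your overall plan mirrors the paper's: reconstruct the graph from the $\PCD$ via an explicit procedure and then verify three properties (every internal vertex is detected, no vertex is split, no distinct vertices are merged). The paper encodes the reconstruction as Algorithm~\ref{algorithm:non_symmetric} and proves all three properties by repeatedly applying a single combinatorial lemma (Lemma~\ref{lem:nondisjoint}): for any non-constant Boolean predicate $A$ on the paths through a nonseparable vertex $x$, the source sets $S_x^1,S_x^2$ and the receiver sets $R_x^1,R_x^2$ defined by $A$ cannot both be disjoint. Your direct argument for \emph{Detection} (the out-edges of $x$ induce a partition of $S_x$ and $R_x$, which would separate $x$) is correct and is essentially a specialization of that lemma; your observation that nonseparability is exactly connectivity of the bipartite incidence on $S_x\cup R_x$ is also correct.

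There is, however, a genuine gap in your \emph{No over-merging} step. Your identification rule declares two branch points equal whenever they lie on branches to a common boundary vertex $\hat b$ and are at equal distance from $\hat b$. But equal distance to $\hat b$ along two \emph{different} paths $\Path(b_1,\hat b)$ and $\Path(b_2,\hat b)$ does not force the two points to coincide: if the $(b_1,b_2\succ\hat b)$-junction sits at distance $\ell=\PCD(b_1,b_2\succ\hat b)$ from $\hat b$ and your branch points are at distance $d>\ell$, they lie on disjoint portions of the two paths and may well be distinct internal vertices that merely happen to have the same distance to $\hat b$. Your strictly-increasing-distance argument establishes uniqueness of position only \emph{along a single path}; it says nothing across paths. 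The fix is exactly what the paper builds into the algorithm: identify only when the $\PCD$ confirms that the two paths share at least that much, i.e.\ $d\le\PCD(b_1,b_2\succ\hat b)$ on the receiver side and $\delta\le\PCD(b\prec\hat b_1,\hat b_2)$ on the source side---these are precisely the tests on lines~\ref{algo:transfer_x_tail}--\ref{algo:transfer_x_head}. With this check added, your bipartite-connectivity argument can be made to work for \emph{No over-splitting}, but you should also be aware that the chaining must propagate along \emph{paths} through $x$ rather than along tree-copies of $x$, since $x$ need not be a branch point in every logical tree it appears in; this is why the paper's recursion acts on the reconstructed paths $\RPath(u,v)$ and not on the logical trees, and why Lemma~\ref{lem:nondisjoint} is phrased in terms of arbitrary predicates on paths.
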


The theorem will be proved constructively, by presenting a
reconstruction algorithm and verifying its result.  The second part,
which posits not only uniqueness but also the existence of the
reconstructed graph, means, in practical terms, that even when given
PCD from a graph that does not satisfy the conditions, the algorithm
will terminate and produce a ``nearby'' result which does.

%%%%%%%%%%%%
\subsection{Comments on the algorithm for non-symmetric routing}

The algorithm for the case of non-symmetric routing
(Algorithm~\ref{algorithm:non_symmetric}) works by discovering the
internal vertices and reconstructing the routing paths in the format
\begin{equation}
\label{eq:reconstructed_path_format}
\RPath(b, \hat{b}) = [(b,0),\,(x_1,\delta_1),\,(x_2,\delta_2),
\ldots,\, (\hat{b},\delta)],
\end{equation}
where $\delta_i$ is the cumulative distance from $b$ to $x_j$ along the
path (naturally, $\delta = |\Path(b, \hat{b})|$).  Once every path is
complete, the edges can be read off as pairs of consecutive vertices
appearing in a path.  The internal vertices
are discovered as junctions.  The main difficulty lies in identifying
different junctions that correspond to the same vertex.  This is done
by a depth-first search in the function {\sc UpdatePath}.

\algnewcommand{\IfThen}[2]{%
  \State \algorithmicif\ #1\ \algorithmicthen\ #2}
\algnewcommand{\IfThenElse}[3]{% \IfThenElse{<if>}{<then>}{<else>}
  \State \algorithmicif\ #1\ \algorithmicthen\ #2\ \algorithmicelse\ #3}

\begin{algorithm}[t]
	\caption{Reconstruction of the network graph
		$\NetworkGraph$}
	\label{algorithm:non_symmetric}
	\begin{algorithmic}[1]
		\For{$b_1,b_2 \in V^B$} \Comment{{\bf Initialization}}
		\State $\RPath(b_1,b_2) = [ (b_1,0),(b_2, |\Path(b_1,b_2)|) ]$
		\EndFor
		
		%    \Statex
		\For{$b_1,b_2,b_3 \in V^B$}  \Comment{{\bf Main Loop}}
		\State create label $a$ \label{algo:create_x}
		\State $\delta = \PCD(b_1 \prec b_2,b_3)$ \label{algo:distance_x}
		\State {\sc UpdatePath}($\RPath(b_1,b_2), a,\delta$) \label{algo:first_call}
%		\State create label $a'$ \label{algo_create_y}
                \IfThenElse {``symmetric routing''} {$a'=a$} {create label $a'$} \label{algo_create_y}
		\State $\delta' = |\Path(b_2,b_1)| - \PCD(b_2,b_3 \succ b_1)$ \label{algo:dist_y} 
		\State {\sc UpdatePath}($\RPath(b_2,b_1), a',\delta'$) \label{algo:second_call}
		\EndFor
		
		%    \Statex
		\State \textbf{Read off} the graph from reconstructed paths
		$\RPath$. \Comment{{\bf Return the result}}
		
		\Statex
		\Function{UpdatePath}{$\RPath(u,v),a,\delta$}
		\Comment{{\bf Recursive Function}} \label{algo:function_start}
		\IfThen {$\exists\,(\cdot,\delta) \in \RPath(u,v)$} {return}
		\label{algo:check_vertex}
		\State insert $(a,\delta)$ into $\RPath(u,v)$
                \State $\delta' = |\Path(u,v)| - \delta$
		\For {$z \in V^B$} \label{algo:discovery_loop}
		\IfThen {$\PCD(u \prec v,z) \geq \delta$} 
		\label{algo:transfer_x_tail}
		{\sc UpdatePath}($\RPath(u,z),a,\delta$)
		%\EndIf
		\IfThen {$\PCD(u,z \succ v) \geq \delta'$} 
		\label{algo:transfer_x_head} 
                {\sc UpdatePath}($\RPath(z,v),a,|\Path(z,v)| - \delta'$)
		%\EndIf
		\EndFor
		%\EndIf 
		\EndFunction
	\end{algorithmic}
\end{algorithm}

The following comments might be in order.  In
lines~\ref{algo:create_x}--\ref{algo:distance_x} $a$ is the label for
the vertex that is the $(b_1 \prec b_2,b_3)$-junction and $\delta$ is the distance from $b_1$ to $a$ along the path $\Path(b_1,b_2)$.  In
lines~\ref{algo_create_y}--\ref{algo:dist_y} $a'$ is the label for the
$(b_2,b_3 \succ b_1)$-junction and $\delta'$ is the distance from $b_2$ to $a'$ along the path $\Path(b_2,b_1)$.  If we know {\it a priori\/} that the routing is symmetric, the $(b_1 \prec b_2,b_3)$-junction and the $(b_2,b_3 \succ b_1)$-junction are the same vertex and can receive the same label.  

Line~\ref{algo:check_vertex} checks if there is already a vertex
distance $\delta$ from $b_1$ (if there is, label $a$ is unused).
Finally, the loop starting on line~\ref{algo:discovery_loop} looks for
any other paths that the vertex with label $a$ must belong to. Here we rely heavily on the tree consistency property.

\begin{figure}[h]
	\centering
	\includegraphics[width=0.8\textwidth]{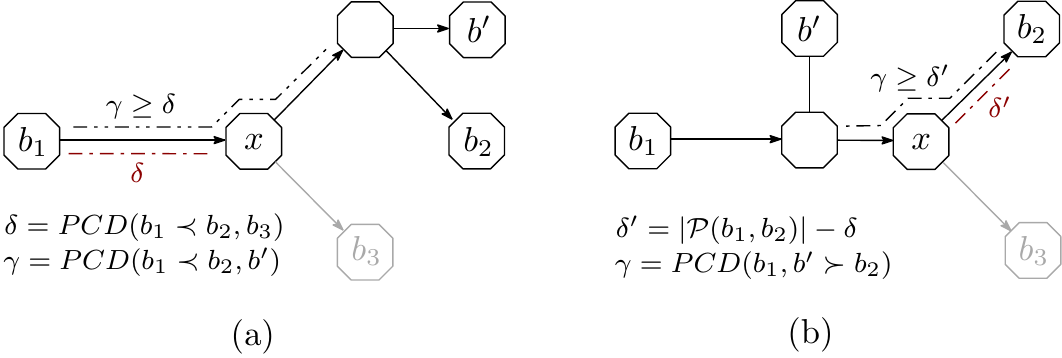}
	\caption{Insertion of internal vertex $x$ in the reconstructed paths, (a) $\RPath(u,z)$, and (b) $\RPath(z,v)$ by fulfillment of conditions~\ref{algo:transfer_x_tail} and~\ref{algo:transfer_x_head} respectively called by line~\ref{algo:first_call} of Algorithm~\ref{algorithm:non_symmetric}.}
	\label{fig:VertexInsertion}
\end{figure}

Some further code improvements are possible.  Creating and then
discarding unused labels can be avoided either by performing a check
similar to line~\ref{algo:check_vertex} in the main loop or, more
elegantly, by making $a$ an optional argument to the function {\sc
UpdatePath} and creating a label after line~\ref{algo:check_vertex}
if no $a$ was supplied.  Additionally, if the edge weights are
symmetric, the call to $\PCD$ in line \ref{algo:dist_y} can be avoided
by using $\PCD(b_2,b_3 \succ b_1) = \PCD(b_1 \prec b_2,b_3) = \delta$.

Finally, a crude upper bound on complexity of the algorithm (in terms
of label insertions into various $\RPath$) is $|V^I| \times |V^B|^2$
i.e. the product of number of internal vertices and the square number
of boundary vertices of the graph.

%%% Local Variables:
%%% mode: latex
%%% TeX-master: "exact_reconstruction"
%%% End:

\section{Proof of the Reconstruction: Non-Symmetric Paths}
\label{sec:proof_nonsymmetric}
The proof of Theorem~\ref{thm:main} has three parts, with very similar
arguments in each part.  To facilitate the proof, we first state and
prove an auxiliary lemma.

\begin{lem}
	\label{lem:nondisjoint}
	Let $x$ be an arbitrary \emph{non-separable} internal vertex
        and let $A: V^B \times V^B \to \{\mathrm{T}, \mathrm{F}\}$ be
        a Boolean property (predicate) that is defined on the pairs
        $\left(b,b'\right)$ such that $x\in\Path(b,b')$.  Assume $A$
        is non-constant (i.e. true on some paths and false on
        some others). Define $S_x^1$ to be the set of the sources of the
        paths through $x$ for which $A$ is true and $S_x^2$ to be the
        set of the sources of the paths for which $A$ is false.
        Define $R_x^1$ and $R_x^2$ analogously.  More formally,
	\begin{equation}
	\label{eq:partitions}
	\begin{split}
	S_x^1 &= \left\{ b\in S_x : \exists b' \in R_x \, 
	\Big[ x \in  \Path(b,b')
	\, \wedge \, 
	A\left(b,b'\right) \Big]
	\right\},\\ 
	S_x^2 &= \left\{ b\in S_x : \exists b' \in R_x \, 
	\Big[ x \in  \Path(b,b')
	\, \wedge \, 
	\neg A\left(b,b'\right) \Big] 
	\right\},\\
	R_x^1 &= \left\{ b'\in R_x : \exists b\in S_x \, 
	\Big[ x \in  \Path(b,b')
	\, \wedge \, 
	A\left(b,b'\right) \Big]
	\right\},\\ 
	R_x^2 &= \left\{ b'\in R_x : \exists b\in S_x \, 
	\Big[ x \in  \Path(b,b')
	\, \wedge \, 
	\neg A\left(b,b'\right) \Big]
	\right\}.
	\end{split}
	\end{equation}
	Then $S_x^1 \cap S_x^2$ and
	$R_x^1 \cap R_x^2$ cannot both be empty.
\end{lem}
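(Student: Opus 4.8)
The plan is to argue by contradiction: I would assume that \emph{both} $S_x^1 \cap S_x^2$ and $R_x^1 \cap R_x^2$ are empty and show that the four sets then constitute a separation of $x$ in the sense of Definition~\ref{def:separable}, contradicting the hypothesis that $x$ is non-separable. First I would record two bookkeeping facts. Since every source $b$ of a path through $x$ admits at least one $b'$ with $x \in \Path(b,b')$, and $A(b,b')$ is either true or false on that path, we have $S_x = S_x^1 \cup S_x^2$, and symmetrically $R_x = R_x^1 \cup R_x^2$. Under the contradiction hypothesis these two unions are disjoint, so they are genuine partitions. Moreover, because $A$ is non-constant there is a path through $x$ on which $A$ holds and one on which it fails; the former places (its source, its receiver) in $(S_x^1, R_x^1)$ and the latter in $(S_x^2, R_x^2)$, so all four pieces are non-empty, as the definition of separability demands.

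The core of the argument is verifying the implication \eqref{eq:separable_def}: for $b \in S_x^j$ and $\hat{b} \in R_x^{j'}$ with $j \neq j'$, one must have $x \notin \Path(b,\hat{b})$. I would establish this by a nested contradiction. Suppose instead $x \in \Path(b,\hat{b})$ and consider the value of $A(b,\hat{b})$. Take first the case $j=1$, $j'=2$. If $A(b,\hat{b})$ is true then $\hat{b}$ meets the defining condition for $R_x^1$, so $\hat{b} \in R_x^1 \cap R_x^2$; if $A(b,\hat{b})$ is false then $b$ meets the condition for $S_x^2$, so $b \in S_x^1 \cap S_x^2$. Either way we contradict the emptiness of one of the two intersections. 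The case $j=2$, $j'=1$ is entirely symmetric, with the roles of the ``true'' and ``false'' halves interchanged. Hence no path through $x$ can have its source and receiver in opposite index classes, which is exactly \eqref{eq:separable_def}.

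Collecting these facts, the partitions $S_x = S_x^1 \cup S_x^2$ and $R_x = R_x^1 \cup R_x^2$ are disjoint, non-empty, and satisfy the separability condition, so $x$ is separable --- contradicting the hypothesis. I therefore conclude that $S_x^1 \cap S_x^2$ and $R_x^1 \cap R_x^2$ cannot both be empty. I expect the only delicate point to be keeping the two orderings of the index pair $(j,j')$ straight and invoking the correct half of each partition in the case analysis; the covering identities $S_x = S_x^1 \cup S_x^2$, $R_x = R_x^1 \cup R_x^2$ and the non-emptiness of the pieces are immediate from the definitions together with the non-constancy of $A$, and so should require no real work.
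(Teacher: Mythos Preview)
Your proposal is correct and follows essentially the same approach as the paper: assume both intersections are empty, observe that the four sets then form the required non-empty disjoint partitions of $S_x$ and $R_x$, and derive a contradiction with non-separability by examining the value of $A$ on a putative ``crossing'' path. The only cosmetic difference is that you verify the separability condition \eqref{eq:separable_def} directly (showing no crossing path exists, hence $x$ is separable), while the paper invokes non-separability to produce a crossing path and then reaches the contradiction; these are contrapositives of one another.
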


\begin{proof}
  Since $A$ is not always false, the sets $S_x^1$ and $R_x^1$ are
  non-empty; since $A$ is not always true, $S_x^2$ and $R_x^2$ are
  non-empty.  Furthermore, it is easy to see that
  \begin{equation}
    \label{eq:unionSandR}
    S_x^1 \cup S_x^2 = S_x \qquad\mbox{and}\qquad
    R_x^1 \cup R_x^2 = R_x.
  \end{equation}
  Assume that $S_x^1 \cap S_x^2 = R_x^1 \cap R_x^2 = \emptyset$.  Then
  we are in a position to use non-separability of the vertex $x$ and
  to conclude that there is a path (without loss of generality) from
  $b_1 \in S_x^1$ to $b_2 \in R_x^2$.  But this path either has
  property $A$ or it does not.  In the former case, $b_2 \in R_x^1$
  and in the latter $b_1 \in S_x^2$, contradicting the assumption of
  disjointedness.
\end{proof}

%%%%%%%%%%%%%%%%%%%%%%%%%

\begin{proof}[Proof of Theorem~\ref{thm:main}: unique reconstructability]
	
	We will now verify that, given the PCD from a graph that satisfies
	the conditions of the Theorem, the algorithm will produce the
	correct reconstruction.  It is straightforward to check that the
	algorithm places a label for a vertex $x$ only in the
	reconstructions of paths that actually contain $x$ and with the
	right value of the cumulative distance $\delta$.  Thus it remains to
	show that
	\begin{itemize}
		\item every vertex $x$ has at least one label created for it
		\item the reconstructed paths are not missing any vertices.
		\item no more than one label is created for each vertex
	\end{itemize}
	Then we will read off sequential pairs of vertices from the
	reconstructed paths to identify edges.  Since there are no missing
	vertices in $\RPath$, the edges thus reconstructed will
	correspond to actual edges in $E$.  By condition~\ref{cond:edges} of
	the theorem, every edge will appear in at least one $\RPath(u,v)$ and
	will therefore be reconstructed.
	
	Let $x$ be an arbitrary internal vertex.  We would like to show the
	algorithm will create a label for $x$ and place it in some
	reconstructed path containing $x$.
	
	Fix an arbitrary path through $x$ and denote the edges that
        the path visits while going through $x$ by
        $e_1 = (x_1,x)$ and $e_2=(x,x_2)$.  We will now use
        Lemma~\ref{lem:nondisjoint} with the property
        $A= A\left(b,b'\right)$ defined as the statement ``the path
        $\Path(b,b')$ passes through \emph{both} $e_1$ and $e_2$'',
        or, in other words
	\begin{equation}
	\label{eq:firstProperty}
	A\left(b,b'\right) = 
	\text{``$\Path(b,b')$ can be written as 
		$[b,\ldots,x_1,x,x_2,\ldots,b']$''}.
	\end{equation}
	There is at least one path on which $A$ is true.  Since $x$ is
        non-trivial and each of its incident edges belongs to at least
        one simple path, there is at least one other path passing
        through $x$ and \emph{not} containing both $e_1$ and $e_2$.
        Therefore we can apply Lemma~\ref{lem:nondisjoint} and
        conclude that one of the pairs $S_x^1$ and $S_x^2$ or $R_x^1$
        and $R_x^2$ are not disjoint.
	
	Without loss of generality, consider a boundary vertex
        $b \in S_x^1 \cap S_x^2$.  Let $\Path_1(b,b_1)$ be a path
        containing both $e_1$ and $e_2$ and $\Path_2(b,b_2)$ be a path
        that passes through $x$ but does not contain both $e_1$ and
        $e_2$.  Since both $\Path_1$ and $\Path_2$ contain $b$ and
        $x$, they must coincide from $b$ to $x$ by the tree
        consistency property.  Therefore, $\Path_2$ contains $e_1$ and
        can not contain $e_2$.  We conclude that $\Path_1$ and
        $\Path_2$ diverge exactly at $x$, see
        Fig.~\ref{fig:NonSymProof_1}.  In other words, $x$ is the
        $(b \prec b_1, b_2)$-junction and a label will be created for
        it in the main loop.  This label will be placed into
        $\RPath(b',b_1)$ unless there is already another label
        corresponding to $x$ there.

	\begin{figure}[ht]
		\centering
		\includegraphics[width=0.4\textwidth]{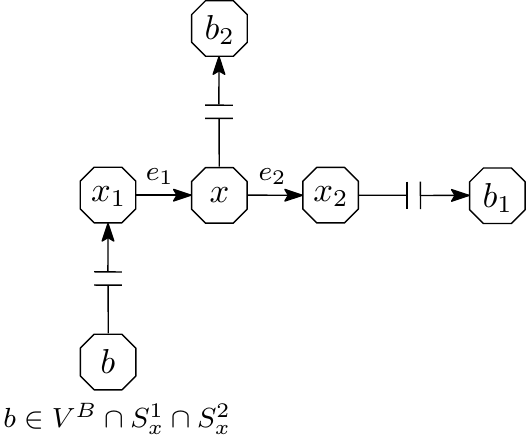}
		\caption{Two paths through a vertex $x$. The path
                  $\Path(b,b_1)$ contains both $e_1$ and $e_2$ and the
                  $\Path(b,b_2)$ does not.}
		\label{fig:NonSymProof_1}
	\end{figure}

	We will now prove that each reconstructed path contains labels for all
	vertices composing the actual path.  Assume the contrary:
        there is a path whose reconstruction does not contain a label
        for an internal vertex $x$. Define
	the property $A$ by
	\begin{equation}
	\label{eq:secondProperty}
	A\left(b,b'\right) = 
	\text{``$\exists a(x) \in \RPath(b, b')$''}.
	\end{equation}
	We have already proved that a label for $x$ will be placed in
        \emph{some} path; together with our assumption it means that
        $A$ is non-constant and we can apply
        Lemma~\ref{lem:nondisjoint}.
	
	If there is a vertex $b \in S_x^1 \cap S_x^2$ then we can find
	$b_1', b_2' \in V^B$ such that $\Path(b,b_1')$ and $\Path(b,b_2')$
	both pass through $x$ but $\RPath(b,b_1')$ has a label corresponding
	to $x$ and $\RPath(b,b_2')$ does not.  By the tree consistency
	property, the two paths coincide from $b$ to at least $x$, therefore
	$\text{PCD}(b \prec b_1', b_2') \geq \delta$, where $\delta$ is the
	distance from $b$ to $x$ along the path $\Path(b,b_1')$.  However, the
	label $a(x)$ was placed into $\RPath(b,b_1')$ by a call to {\sc
	UpdatePath} with this $\delta$.  This would trigger 			the
	condition on line~\ref{algo:transfer_x_tail} with $w = b_2'$ and the
	same label would be placed into $\RPath(b,b_2')$, a contradiction.
	The case when $R_x^1 \cap R_x^2$ is non-empty is treated similarly but
	this time condition on line~\ref{algo:transfer_x_head} ensures the
	transfer of the label.
	
	%%%%%%%%%%%% 
	
	The last step of the proof of reconstruction is to check that only one
	label is created for any vertex.  Assume the contrary, there is a
	vertex $x$ which has at least two different labels corresponding to it
	appearing in different reconstructed paths.  Fix one of the labels
	$a(x)$ and define property $A$ by
	\begin{equation}
	\label{eq:thirdProperty}
	A\left(b,b'\right) = 
	\text{``$a(x) \in \RPath(b, b')$''}.
	\end{equation}
	This definition of $A$ is very similar to
	eq.~\eqref{eq:secondProperty}, but here $a(x)$ is some fixed
	label, whereas in \eqref{eq:secondProperty} it was \emph{any} label of
	$x$.
	
	By Lemma~\ref{lem:nondisjoint} and without loss of generality,
        there is a vertex $b \in S_x^1 \cap S_x^2$.  This means that
        there is a path $\Path(b,b_1)$ with the label $a(x)$ and a
        path $\Path(b,b_2)$ with a label $a'(x)$ different from $a(x)$
        (we remark that we have already shown that each reconstructed
        path contains labels for all its vertices).  As before, the
        two paths coincide from $b$ to at least $x$, therefore
        $\text{PCD}(b \prec b_1', b_2') \geq \delta$, where $\delta$
        is the distance from $b$ to $x$ along the path
        $\Path(b,b_1')$.  During the execution of the algorithm, one
        of the labels was placed in its respective path first.  But
        then the condition on line~\ref{algo:transfer_x_tail} would be
        triggered and the same label would be copied to the other
        path, before the other label is created.  We have thus arrived
        to a contradiction.
\end{proof}

\begin{proof}[Proof of Theorem~\ref{thm:main}: reconstruction of a
	non-compliant graph]
	
	We will now consider the output of the algorithm in case the PCD was
	created by the network graph $\NetworkGraph$ that violates some of
	the conditions of the Theorem.  We will show that there is a
	``compliant'' network graph that has the same PCD and which will
	therefore serve as the output of the algorithm.
	
	We start from the network graph $\mathcal{N} = \NetworkGraph$
        and apply the following ``cleaning'' operations to them (the
        order is important): (1) remove all unused edges, (2) split
        each separable vertex into 2 or more non-separable
        vertices\footnote{See Remark~\ref{rem:max_splitting} below for
          a detailed discussion of why such a splitting exists.},
        (3) remove each trivial vertex by merging its incident edges
        into one edge.  The only adjustments needed to the paths
        $\Paths$ is to choose the correct copies of the split
        vertices.  The set of boundary vertices remains the same.
	
	An example of performing these operations is shown in
	Fig.~\ref{fig:CleaningExam}. In particular, the internal vertex $x$ 		fails
	the non-separability condition, which is seen by defining the
	disjoint partitions
	\begin{align*}
	S = S^1 \cup S^2 = \{ b_1,b_3 \} \cup \{b_2\} \\
	R = R^1 \cup R^2 = \{ b_2 \} \cup \{b_1, b_3\}
	\end{align*}    
	
	\begin{figure}[ht]
	\centering
	\includegraphics[width=0.55\textwidth]{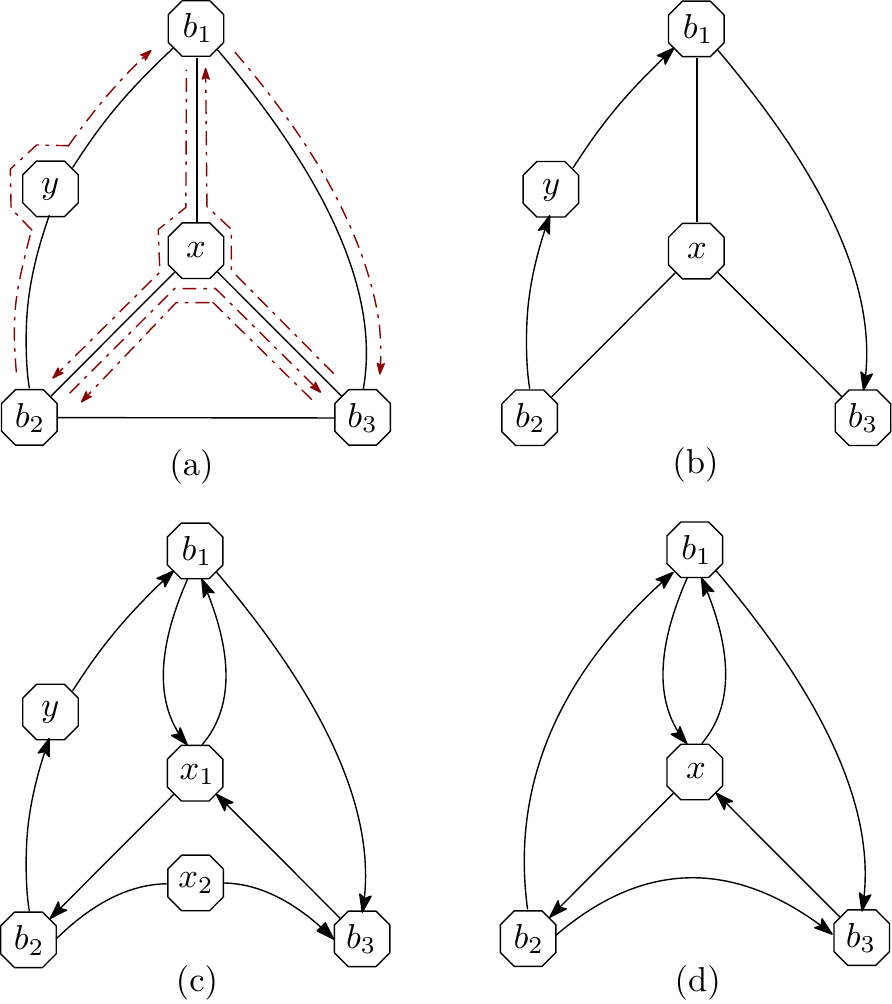}
	\caption{Cleaning of a graph: (a) the original graph (b) removing
		unused edge $e=(b_2,b_3)$ (c) splitting the vertex $x$ into
		$x_1$ and $x_2$, (d) removing trivial vertices $y$ and $x_2$.
		The edges that have no direction marked on them run in both
		directions.}
	\label{fig:CleaningExam}
	\end{figure}
	
	We remark that splitting a separable vertex may require duplicating
	some incoming or outgoing edges, see Fig.~\ref{fig:SeparateExam}, in
	which case the weights get duplicated too.  The edges will be
	duplicated only if both resulting edges are present in some paths;
	thus no unused edges will be created.
	
	Suppose an internal vertex got split into $x_1$ and $x_2$.
	It follows from the definition of the separable vertex that if
	$x\in\Path(b,b_1)$ and $x\in\Path(b,b_2)$ before the split, then
	after the split either both paths contain $x_1$ or both of them
	contain $x_2$.  Therefore the length of each intersection of the
	form $\Path(b,b_1)\cap \Path(b,b_2)$ remains unchanged after a
	split.  The same applies to any pair of paths $x\in\Path(b_1,b)$ and
	$x\in\Path(b_2,b)$.  We conclude that the PCD remains unchanged by
	the operations above.
	
	We denote the network graph so obtained by $\mathcal{N}^C$ (for
	``compliant'' or ``cleaned'').  It is easy to see that
	$\mathcal{N}^C$ satisfies the conditions of the Theorem and will
	therefore be reconstructed from its PCD by the algorithm.  However,
	the PCD of $\mathcal{N}^C$ is the same as the PCD of $\mathcal{N}$.
	Therefore, given the PCD of $\mathcal{N}$, the algorithm will return
	the network graph $\mathcal{N}^C$.  This logic is illustrated by
	Fig.~\ref{fig:CleanChart}.
	
	\begin{figure}[ht]
		\centering
		\includegraphics[width=0.4\textwidth]{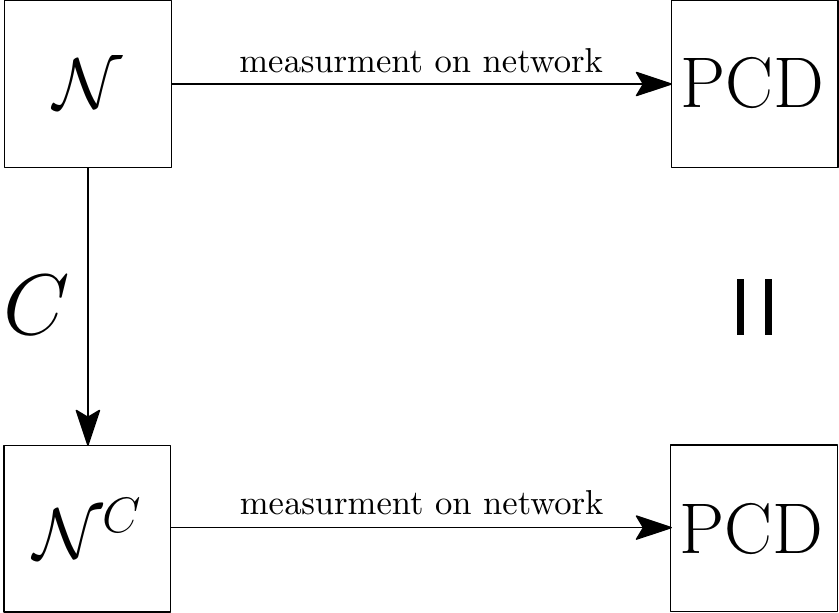}
		\caption{Flow of the proof of the reconstruction of a
			non-compliant graph: $C$ is the operations that produce the
			corresponding compliant network graph $\mathcal{N}^C$, the Path
			Correlation Data of both graphs is the same.}.
		\label{fig:CleanChart}
	\end{figure}
\end{proof}

\begin{remark}
  \label{rem:max_splitting}
  In the proof above we implicitly assumed that there exists a unique
  maximal splitting of a vertex into non-separable parts.  We outline
  the proof of this fact.  Fix $x$ and define equivalence relations
  $\overset{S}{\sim}$ and $\overset{R}{\sim}$ on $S_x$ and $R_x$
  correspondingly by letting
  \begin{align}
    \label{eq:S_equiv}
    b_1 \overset{S}{\sim} b_2
    \qquad&\mbox{if}\qquad
    \exists \hat{b}:\ x\in\Path(b_1,\hat{b}) 
    \mbox{ and }x\in\Path(b_2,\hat{b}),\\
    \hat{b}_1 \overset{R}{\sim} \hat{b}_2
    \qquad&\mbox{if}\qquad
    \exists b:\ x\in\Path(b,\hat{b}_1) 
    \mbox{ and }x\in\Path(b,\hat{b}_2),
  \end{align}
  and closing each one by transitivity.  It is easy to see that they
  are ``dual'' in the following sense: if $x\in\Path(b_1, \hat{b}_1)$ and
  $x\in\Path(b_2,\hat{b}_2)$ then 
  \begin{equation}
    \label{eq:equivalence_duality}
    b_1 \overset{S}{\sim} b_2 
    \quad\Leftrightarrow\quad
    \hat{b}_1 \overset{R}{\sim} \hat{b}_2.
  \end{equation}
  As a consequence, there is a natural one-to-one correspondence
  between the equivalence classes of $\overset{S}{\sim}$ and the
  equivalence classes of $\overset{R}{\sim}$.  These equivalence
  classes provide the finest separation of the vertex $x$.  To be more
  precise, the disjoint partitions
  \begin{align}
    S_x = [b_1]_S \cup [b_2]_S \cup \ldots \cup [b_m]_S,\\
    R_x = [\hat{b}_1]_R \cup [\hat{b}_2]_R \cup \ldots \cup
    [\hat{b}_m]_R,
  \end{align}
  satisfy condition~\eqref{eq:separable_def_symm}.  Here $[\cdot]_S$
  and $[\cdot]_R$ denote equivalence classes with respect to $\overset{S}{\sim}$ and
  $\overset{R}{\sim}$ and $x \in \Path(b_j,\hat{b}_j)$ for every $j$.

  Furthermore, if the nonempty partitions
  $S_x = S_x^1 \cup S_x^2$ and $R_x = R_x^1 \cup R_x^2$ satisfy
  Definition~\ref{def:separable} then for any $b\in S_x$, either
  $[b]_S \subset S_x^1$ or $[b]_S \subset S_x^2$, and similarly for $R$.
\end{remark}

%%% Local Variables:
%%% mode: latex
%%% TeX-master: "exact_reconstruction"
%%% End:

\section{Symmetric Paths}
\label{sec:proof_symmetric}
Reconstruction of network graph with the prior knowledge that the routing is symmetric (the edge weights may not be symmetric) has the advantage of being able to reconstruct a wider class of network graphs. In the following we present the proof of unique reconstruction of network graph with symmetric routing if the conditions of the main Theorem are satisfied. The only differences on the required conditions for exact recovery are contained in definition~\ref{def:symm_vertex} for the non-triviality and non-separability of internal vertex. We start with an appropriate modification of Lemma~\ref{lem:nondisjoint}. 

\begin{lem}
	\label{lem:nondisjointsym}
	Let $x$ be an arbitrary \emph{non-separable} internal vertex and let
	$B$ be a non-constant \emph{symmetric} Boolean property (predicate) that is defined on the pairs $\left(b,b'\right)$ such that $x\in\Path(b,b')$ and $x \in \Path(b',b)$.  Define $S_x^1$ to be the set of the sources of the paths through $x$ for which $B$ is true and $S_x^2$ to be the set of the sources of the paths for which $B$ is false.  More formally,
	\begin{equation}
	\label{eq:partitionssym}
	\begin{split}
	S_x^1 &= \left\{ b\in S_x : \exists b_1 \in V^B \, 
	\Big[ x \in  \Path(b,b_1)
	\, \wedge \, 
	B\left(b,b_1\right) \Big]
	\right\},\\ 
	S_x^2 &= \left\{ b\in S_x : \exists b_2 \in V^B \, 
	\Big[ x \in  \Path(b,b_2)
	\, \wedge \, 
	\neg B\left(b,b_2\right) \Big] 
	\right\}
	\end{split}
	\end{equation}
	Then $S_x^1 \cap S_x^2$ cannot be empty.
\end{lem}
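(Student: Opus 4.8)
The plan is to mirror the argument of Lemma~\ref{lem:nondisjoint}, while exploiting the extra symmetry now available so as to avoid introducing any partition of $R_x$ at all. First I would record two preliminary facts, exactly as in the non-symmetric case. Since $B$ is non-constant, there is at least one path through $x$ on which it is true and at least one on which it is false; the source of the former lies in $S_x^1$ and the source of the latter in $S_x^2$, so both sets are non-empty. Moreover, every source $b \in S_x$ carries at least one path $\Path(b,b')$ through $x$, on which $B$ is either true or false, so $S_x^1 \cup S_x^2 = S_x$. The key structural observation I would make at the outset is that symmetric routing forces $x \in \Path(b,b')$ if and only if $x \in \Path(b',b)$; consequently $S_x = R_x$, and the clause in the hypothesis that $B$ be defined jointly on $(b,b')$ and $(b',b)$ is automatic whenever $x$ lies on either path.

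Next I would argue by contradiction. Suppose $S_x^1 \cap S_x^2 = \emptyset$. Combined with $S_x^1 \cup S_x^2 = S_x$ and the non-emptiness just noted, this exhibits $S_x = S_x^1 \cup S_x^2$ as a genuine disjoint partition into two non-empty pieces. Applying the non-separability of $x$ in the sense of Definition~\ref{def:symm_vertex}, there must then exist $b_1 \in S_x^1$ and $b_2 \in S_x^2$ with $x \in \Path(b_1,b_2)$.

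Finally I would split on the truth value of $B(b_1,b_2)$. If $B(b_1,b_2)$ is true, then symmetry of $B$ gives $B(b_2,b_1)$ true, and symmetric routing gives $x \in \Path(b_2,b_1)$; taking $b_1$ as the witness shows $b_2 \in S_x^1$, contradicting $b_2 \in S_x^2$ and disjointness. If instead $B(b_1,b_2)$ is false, then since $x \in \Path(b_1,b_2)$ directly, taking $b_2$ as the witness places $b_1 \in S_x^2$, contradicting $b_1 \in S_x^1$. Either branch yields a contradiction, establishing $S_x^1 \cap S_x^2 \neq \emptyset$.

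I expect the main difficulty to be conceptual rather than computational: seeing \emph{why} a single partition of $S_x$ now suffices, whereas the non-symmetric Lemma~\ref{lem:nondisjoint} had to track dual partitions of both $S_x$ and $R_x$ and could only conclude that they are not \emph{both} disjoint. The resolution is precisely the symmetry of $B$ together with symmetric routing, which together let one interchange the two endpoints of the connecting path $\Path(b_1,b_2)$ and thereby push back into $S_x$ a contradiction that in the general case would have landed in $R_x$. Getting the bookkeeping of the witnesses right in the two cases is the only point that needs care.
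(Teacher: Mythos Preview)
Your proposal is correct and follows essentially the same argument as the paper: assume disjointness, invoke non-separability (Definition~\ref{def:symm_vertex}) to produce a path $\Path(b_1,b_2)$ through $x$ with $b_1\in S_x^1$, $b_2\in S_x^2$, and split on the truth value of $B(b_1,b_2)$, using symmetry of $B$ in the ``true'' branch. Your write-up is slightly more explicit than the paper's about the preliminary facts ($S_x^1,S_x^2$ non-empty, $S_x^1\cup S_x^2=S_x$, $S_x=R_x$), but the logical skeleton is identical.
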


\begin{proof}
For symmetric routing the receiver $R_x$ and the source set $S_x =
S_x^1 \cup S_x^2$ coincide.  Assume that $S_x^1 \cap S_x^2 = \emptyset$. Then using the non-separability of vertex $x$, there is a path from $b_1 \in S_x^1$ to $b_2 \in S_x^2$. But this path either has property $B$ or it does not.  
\begin{itemize}
\item In the former case, we use symmetry of $B(b_1,b_2)$ to conclude
  $b_2 \in S_x^1$.
	\item In the latter case $B(b_1,b_2)$ is false, thereby $b_1 \in S_x^2$,
\end{itemize} 
either way, contradicting the assumption of disjointedness.
\end{proof}

Now we are in the position to apply Lemma~\ref{lem:nondisjointsym} to prove the unique reconstruction of network graph with symmetric routing. While the proof steps are very similar to the ones discussed in proving Theorem~\ref{thm:main}, the main difference here is to ensure that the Boolean function $B$ is symmetric.    

\begin{proof}[Proof of unique reconstructability for symmetric routing]
\textbf{Every vertex $x$ has at least one label created for it}. For an internal vertex $x$, fix an arbitrary path $\Path_0$ through $x$ and denote the vertices that the path visits before and after getting to $x$ by $x_1$ and $x_2$. Denote by 
\begin{align*}
    \Lambda_x := \big \{(x_1,x), (x,x_1), (x,x_2), (x_2,x) \big \} \subseteq E
\end{align*}
the subset of edges contains $x$ and either $x_1$ or $x_2$ as the other end vertex. Define the property $B = B(b,b')$ as the statement 
\begin{equation}
\label{eq:firstPropertysym}
B\left(b,b'\right) = 
\text{``$\Path(b,b')$ satisfies  $| \Path(b,b') \cap \Lambda_x | = 2 $''},
\end{equation}
in other words, $B$ is true if the path can be written as a sequence 
\begin{align*}
\Path(b,b') = [b,\ldots,x_1,x,x_2,\ldots,b'] \quad \text{or} \quad \Path(b,b') = [b,\ldots,x_2,x,x_1,\ldots,b'].
\end{align*}

Due to the symmetric routing assumption on the network graph, function $B$ will be true for the reverse path $\Path(b',b)$ as well. Because of our choice of $x_1$ and $x_2$, there are at least two paths on which $B$ is true, namely $\Path_0$ and its reversal. Since vertex $x$ is non-trivial, there exists vertex $x_3$ adjacent to $x$ and a path $\Path'$ passing through $(x,x_3)$ or $(x_3,x)$ on which $B$ is false. Therefore we can apply \ref{lem:nondisjointsym} and conclude that $S_x^1$ and $S_x^2$ are not disjoint. 

Consider a boundary vertex $b \in S_x^1 \cap S_x^2$, and let $\Path_1(b,b_1)$ and $\Path_2(b,b_2)$ be the two paths where $B$ is true and false respectively. By the tree property, the two paths follow same set of edges before diverging exactly at $x$ because $B$ is false on $\Path_2$. As a result $x$ is the $(b \prec b_1,b_2)$-junction and a label will be created for it in the main loop of Algorithm~\ref{algorithm:non_symmetric}. This label will be placed in both reconstructed paths $\RPath(b,b_1)$ and $\RPath(b_1,b)$ unless there is already another label corresponding to $x$ there. 

\textbf{The reconstruction paths are not missing any vertices}. Fix an
internal vertex $x$ and define the property 
\begin{equation}
  \label{eq:secondPropertysym}
  B\left(b,b'\right) = 
  \text{``$\exists a(x)$ such that 
    $a(x) \in \RPath(b, b')$ and $a(x) \in \RPath(b', b)$''},
\end{equation}
which is clearly symmetric by its definition. We emphasis that
$\RPath(b, b')$ and $\RPath(b', b)$ are required to contain the same
label for $x$. We already proved that a label will be created for $x$,
and when the first label is created, it will be placed into a path and
its reversal.  Therefore $B$ is true on some paths.  We will prove
that this $B$ holds for all paths $(b,b')$ containing $x$.  This will
establish not only that the reconstructed paths are not missing any
vertices, but also that the labels in $\RPath(b', b)$ are the same as
in $\RPath(b, b')$.

Assume the contrary: $B$ fails for some path.  We apply Lemma
\ref{lem:nondisjointsym} and let $b \in S_x^1 \cap S_x^2$. Then there
exist $b_1,b_2 \in R_x$ so that a representation $\hat{a}(x)$ exists
in both $\RPath(b,b_1)$ and $\RPath(b_1,b)$ while $B$ is false on
$(b,b_2)$.  The latter implies that $\hat{a}(x)$ cannot be present in
both reconstructed paths $\RPath(b,b_2)$ or $\RPath(b_2,b)$. This can
be summarized as follows:
\begin{multline}
  \label{eq:nonexistance}
  \exists \hat{a}(x) \mbox{ such that } \\
  \hat{a}(x) \in \RPath(b,b_1)
  \mbox{ and }
  \hat{a}(x) \in \RPath(b_1,b)
  \mbox{ and }
  \Big(
    \hat{a}(x) \not\in \RPath(b,b_2) \mbox{ or }
    \hat{a}(x) \not\in \RPath(b_2,b)
  \Big).
\end{multline}
Now without loss of generality, assume that
$\hat{a}(x) \not\in \RPath(b_2,b)$. The same arguments as in the proof of
Theorem~\ref{thm:main} guarantee that
$\text{PCD}(b_1, b_2 \succ b) \geq \delta'$, where $\delta'$ is the
distance from $x$ to $b$.  Therefore the insertion of the
representation $\hat{a}(x)$ into $\RPath(b_2,b)$ will be attempted by
triggering the condition in line~\ref{algo:transfer_x_head} for
$z = b_2$ within the call of the function {\sc
  UpdatePath}$(\RPath(b_1,b),\hat{a},\cdot)$.  Since $\hat{a}(x) \not\in
\RPath(b_2,b)$, we conclude that there is another label $a'(x) \in
\RPath(b_2,b)$, placed there earlier by a call to {\sc
  UpdatePath}$(\RPath(b_2,b),a',\cdot)$.  But then the condition in
line~\ref{algo:transfer_x_head} will be triggered with 
$z = b_1$ and the label $a'$ will be placed into $\RPath(b_2,b)$, in
contradiction to our assumptions.

The last step of the proof is to show that \textbf{no more than one label is created for each vertex}. For a fixed label $a(x)$ of vertex $x$, define the property
\begin{equation}
\label{eq:thirdPropertysym}
B\left(b,b'\right) = 
\text{``$a(x) \in \RPath(b, b')$''}.
\end{equation}
which is symmetric since we already proved that property
\eqref{eq:secondPropertysym} holds for all paths containing $x$. By
Lemma~\ref{lem:nondisjointsym} there exists $b \in S_x^1 \cap S_x^2$
where the representation $a(x) \in \RPath(b,b_1)$ and
$a'(x) \in \RPath(b,b_2)$ are distinct.  But the two paths pass same
set of edges at least up to vertex $x$, i.e.\
$\text{PCD}(b \prec b_1,b_2) \geq \delta$, where $\delta$ is the
distance from $b$ to $x$ along the path $\Path(b,b_1)$. The execution
of algorithm places one of the labels in its respective path first. As
a result, before any new label for $x$ is created, condition on
line~\ref{algo:transfer_x_tail} will have been triggered and copied
the label to other paths. This is contradiction to the assumption of
existence of two different labels of $x$ in the set of reconstructed
paths.
\end{proof}	

Finally, for graphs with symmetric routing where the exact reconstruction conditions are not met, similar discussion as in Theorem~\ref{thm:main} proves the uniqueness of the reconstruction result. The main point here is that cleaning operations preserve the symmetric routing on the graph. Moreover the compliant graph $\mathcal{N}^C$ satisfies the exact reconstruction conditions with same PCD as the one for original graph $\mathcal{N}$.

\subsection{Specialized algorithm for symmetric routing}	

We have shown that the reconstruction
Algorithm~\ref{algorithm:non_symmetric} is universal: it covers both
the general non-symmetric network and also graphs with symmetric
routing. However, having the prior information that the routing on the
network is symmetric makes it possible to call the recursive function
{\sc UpdatePath} less. This is due to the symmetric routing property
that if an internal vertex $x$ is inserted in reconstruction path
$\Path(u,v)$ for $u,v \in V^B$ then this vertex should also inserted
in the reverse path $\Path(v,u)$ with appropriate distance from root
$v$ (this is not generally true for non-symmetric routing case). To
take advantage of this feature we propose
Algorithm~\ref{algorithm:symmetric} as a specialized version of the
reconstruction algorithm for the graphs with symmetric routing.

Compared to the Algorithm~\ref{algorithm:non_symmetric}, the new
algorithm calls the recursive function twice less (as can be seen by
comparing the main loops of the two algorithms). The change in the number
of calls of reconstruction function is compensated by adding line 14
in the new algorithm where the label of internal vertex which is
inserted in the given path will be inserted in the reverse one as well.

It should be pointed out that although less calls of main
reconstruction function makes the algorithm computationally more
effective, from the point of view of mathematical complexity (in terms
of insertions of a vertex into a reconstructed path), the two
algorithms can be considered the same.

\begin{algorithm}[h]
	\caption{Reconstruction of a network graph with symmetric routing}
	\label{algorithm:symmetric}
	\begin{algorithmic}[1]
		\For{$b_1,b_2 \in V^B$} \Comment{{\bf Initialization}}
		\State $\RPath(b_1,b_2) = [ (b_1,0),(b_2, |\Path(b_1,b_2)|) ]$
		\EndFor
		
		\For{$b_1,b_2,b_3 \in V^B$}  \Comment{{\bf Main Loop}}
		\State create label $a$ \label{algo:create_x_sym}
		\State $\delta = \text{PCD}(b_1 \prec b_2,b_3)$ \label{algo:distance_x_sym_f}
		\State $\delta' = \text{PCD}(b_2,b_3 \succ b_1)$ \label{algo:distance_x_sym_r}
		\State {\sc UpdatePath}($\RPath(b_1,b_2),a,\delta, \delta'$)
		\EndFor
		
		%    \Statex
\State \textbf{Read off} the graph from reconstructed paths
$\RPath$. \Comment{{\bf Return the result}}

\Statex
\Function{UpdatePath}{$\RPath(u,v),a,\delta, \delta'$}
\Comment{{\bf Recursive Function}} \label{algo:function_start_sym}
\IfThen {$\exists\,(\cdot,\delta) \in \RPath(u,v)$} {return}
\label{algo:check_vertex_sym}
\State insert $(a,\delta)$ into $\RPath(u,v)$
\State insert $(a,|\Path(v,u)| - \delta')$ into $\RPath(v,u)$

\For {$z \in V^B$} \label{algo:discovery_loop_sym}
\IfThen {$\PCD(u \prec v,z) \geq \delta$} {{\sc UpdatePath}($\RPath(u,z),a,\delta,\delta'$)}
\label{algo:transfer_x_tail_sym}
\IfThen {$\PCD(v \prec u,z) \geq \gamma'$} {{\sc UpdatePath}($\RPath(v,z),a,|\Path(v,u)| - \delta',|\Path(u,v)| - \delta $)}
\label{algo:transfer_x_head_sym} 
\EndFor

%\EndIf 
\EndFunction
	\end{algorithmic}
\end{algorithm}

%%%%%%%%%%%%%%%%%%%%%%%%%%%%%%%%%%%%%%%%%
\subsection{Reconstruction example}

In the following example, we will show how having the prior
information of symmetric routing will help to uniquely reconstruct the
network graph shown in Fig.~\ref{fig:Delta}(a). The selected routing
among the boundary vertices $V^B = \{b_1,b_2,b_3\}$ follows the
sequence $\Path(b_i,b_j) = [b_i,x_i,x_j,b_j]$ for $i,j = 1,2,3$. From
the measurement point of view, the PCD on the graph is represented as
the set of observed logical trees in Fig.~\ref{fig:DeltaReconst}.

If there is no information on the symmetry of routing, then we can
not conclude that $a_1 = a_4$, $a_2=a_5$ and $a_3=a_6$.  The resulting
reconstruction will be the graph appearing in Fig.~\ref{fig:Delta}(b).

\begin{figure}[ht]
	\centering
	\includegraphics[scale=0.9]{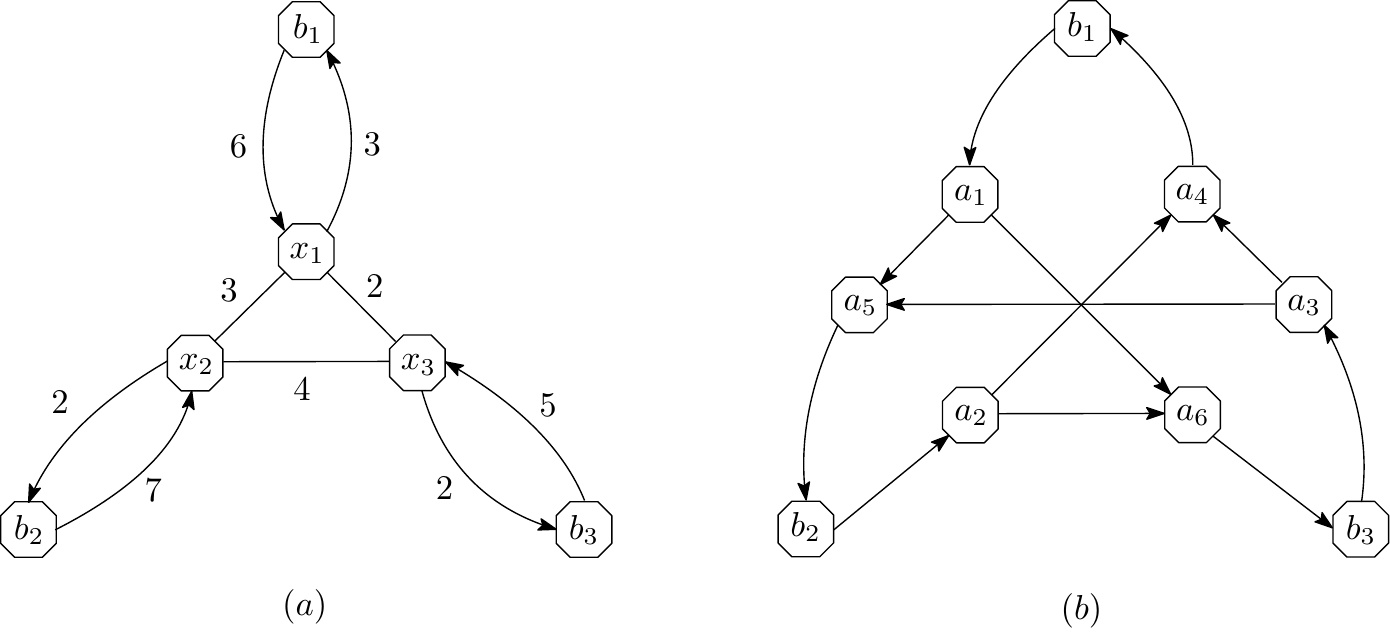}
	\caption{(a) An example of a network graph.  To avoid clutter, the edges
		with no specified direction are assumed to go in both directions
		with the same weight, (b) reconstructed network}
	\label{fig:Delta}
\end{figure}

We remark that such geometry is fairly realistic if we, for example,
consider the vertices $b_j$ to be internet service providers (ISPs)
who are eager to push the traffic addressed outside their network to
other ISPs as soon as possible. Still, the reconstruction does not
match the graph we had originally.

\begin{figure}[h]
	\centering
	\includegraphics[scale=0.9]{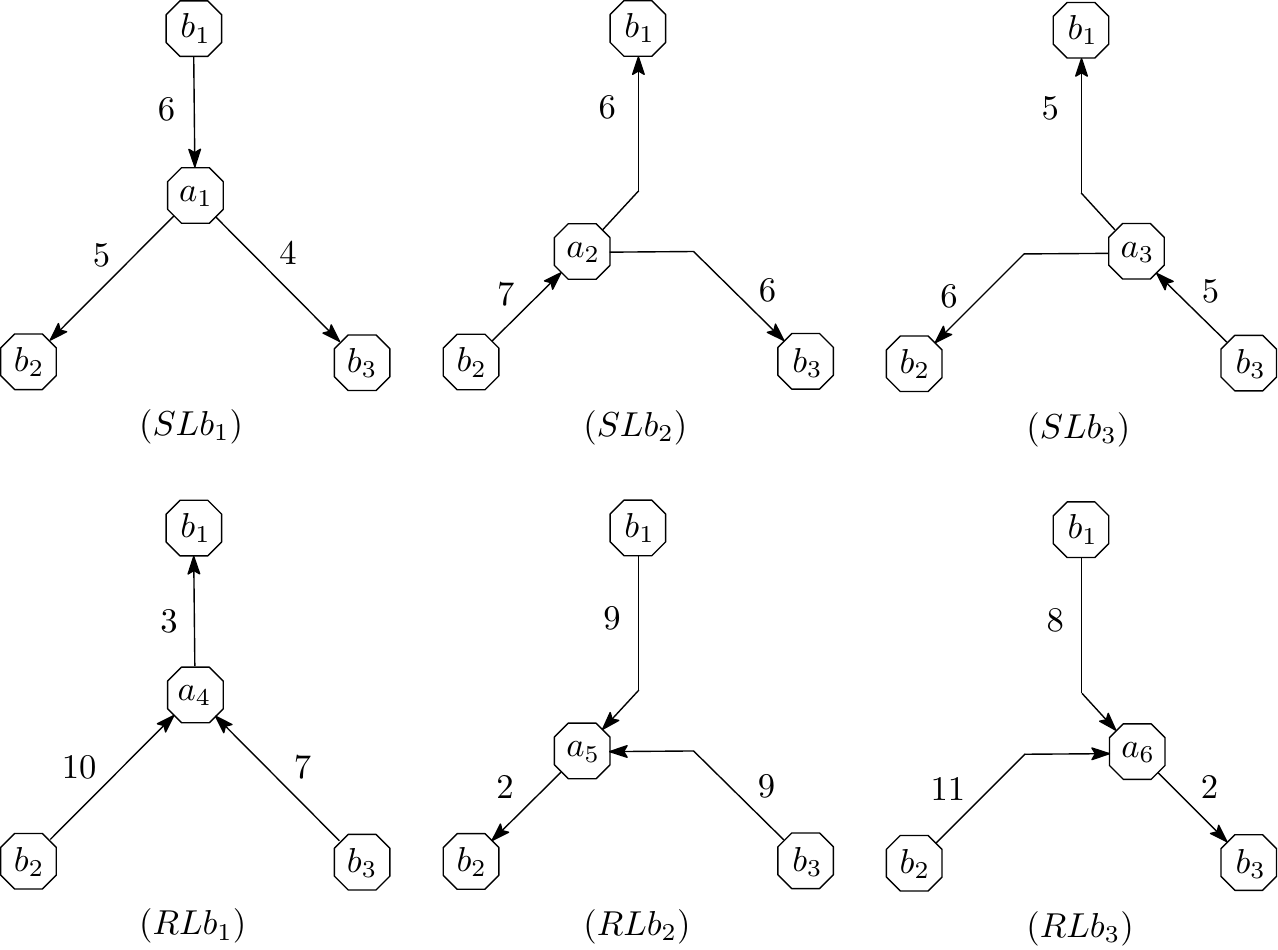}
	\caption{Representation of PCD on the network example through set of logical source and receiver trees.}
	\label{fig:DeltaReconst}
\end{figure}

However, the reconstruction \emph{is possible} if we know a priori
that the routes are symmetric, as they are in this example.  Then the
paths going out of a given source $b$ and the paths going to $b$
acting as the receiver have exactly the same topology.  This
additional information allows us to identify $a_1 = a_4$, $a_2=a_5$
and $a_3=a_6$ in Fig.~\ref{fig:Delta}(b) and thus recover the
original graph.  Since symmetric routing networks appear in
applications, we expect the optimized
Algorithm~\ref{algorithm:symmetric} to be a practical value.

%%%%%%%%%%%%%%%%%%%%%%%%%%%%%%%%%%%%%%%%%%%%%%%%%%%%%%%%
\subsection{The effect of symmetric edge weights}

Symmetric network graph for which both the routing on the graph and
edge weights are symmetric can be considered as a special class of
general networks. For this class of networks, similar to the graphs
with symmetric routing (see eq.~\ref{eq:reversal}), same set of edges
are passed in the two reverse paths $\Path_1(u,v)$ and
$\Path_2(v,u)$. Additionally, the symmetric edge weights property
implies that for any vertex $x \in \Path_1$, then
$|\Path(v,x)| = |\Path(u,v)| - |\Path(u,x)|$. In other words, path
$\Path_2$ is fully identified by the available information form the
path $\Path_1$. This property of symmetric networks implies that
$\text{PCD}(v,w \succ u) = \text{PCD}(u \prec v,w)$, and as a result
the information from \textit{path correlation data} encoded in the form
of $(u \prec v,w)$-junctions can be applied to identify the
information for the $(v,w \succ u)$-junctions. From the
\textbf{reconstruction} point of view, same conditions are required
for exact reconstruction of graphs with symmetric edge weights as
discussed in previous sections and thereby
Algorithm~\ref{algorithm:symmetric} can be applied for reconstruction
purposes.

%%% Local Variables:
%%% mode: latex
%%% TeX-master: "exact_reconstruction"
%%% End:

\section{Conclusion and Future Work}
\label{sec:conclusion_futurework}
In this paper, we solve the problem of reconstructing a network graph
from the measurements of PCD: the common length of any two paths
sharing an origin or a destination.  It has been recently demonstrated
that such measurements can be collected from the ambient traffic,
without encumbering the network with specialized probing packets.

We establish necessary and sufficient conditions for a network graph
to be reconstructible and describe the reconstruction algorithm.  When
the reconstruction is attempted on a graph violating those conditions,
the result of our algorithm is a minimal graph fitting the available
data.  More precisely, among the network graphs that have the same
PCD, the reconstructed graph has the minimal number of links, no
vertices of degree 2 and the minimal average degree among vertices of
degree 3 or more.  Our framework is general and no assumption is made
on the symmetricity of routing or edge weights of original graph is
required.  However, for graphs with symmetric routing less restrictive
conditions are shown to apply and a more specialized algorithm is
presented.

A possible direction for future work is to explore extending the class
of reconstructible graphs by making additional measurements.  The most
important obstacle to overcome is the non-separability condition.
Violation of this condition leads to multiple representations, in the
reconstructed graph, of a single vertex.  For example, for the network
shown in Fig.~\ref{fig:AltSource}, \ref{fig:AltSourceLogic} and
Fig.~\ref{fig:AltRoutingLogicalT2}, the application of the algorithm
will result a network in Fig.~\ref{fig:LastExamp}.  All internal
vertices except $u$ are recovered correctly, whereas the original
vertex $u$ separates into two representative vertices $u'$ and $u''$
in the reconstructed graph.  In future work we will explore whether
extending the weight model to include vertex weights may allow correct
identification of the separable vertices. The motivation for such
model is that load at a switch, e.g.\ due to network attacks, may
introduce performance degradation that affects not just the traffic
through a given link or router interface, but all the packets
traversing the switch.  Finally, as mentioned in the introduction, in
work to be reported elsewhere \cite{PrepUs2018}, we explain how the
methods of the current paper can be extended to Path Correlation Data
in which sampling and measurement noise leads to inconsistencies in
path weights reported for different paths.

\begin{figure}[H]
	\centering
	\includegraphics[scale=1]{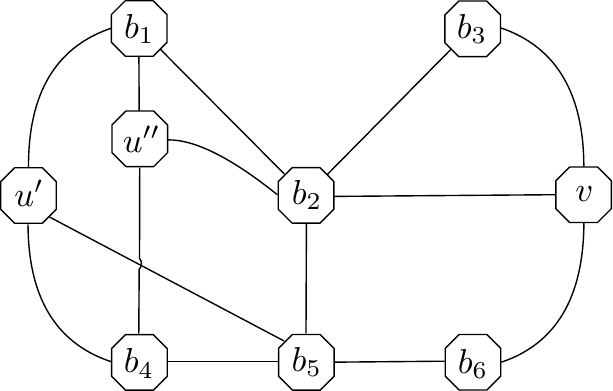}
	\caption{Separation of the internal vertex $u$ in the
          reconstructed network.}
	\label{fig:LastExamp}
\end{figure}

%%% Local Variables:
%%% mode: latex
%%% TeX-master: "exact_reconstruction"
%%% End:

\bibliographystyle{plain}
%\bibliography{tomography}

\end{document}